\documentclass[a4paper,11pt]{amsart}
\usepackage{amssymb}
\usepackage{latexsym}
\usepackage{amsmath}
\usepackage{enumerate}

\usepackage{amsmath, hyperref}
\newtheorem{theorem}{Theorem}[section]
\newtheorem{lemma}[theorem]{Lemma}
\newtheorem{corollary}[theorem]{Corollary}
\newtheorem{proposition}[theorem]{Proposition}

\theoremstyle{definition}
\newtheorem{definition}[theorem]{Definition}
\newtheorem{question}[theorem]{Question}
\newtheorem{fact}[theorem]{Fact}

\numberwithin{equation}{section}



\title{Finding the limit of incompleteness I}
\author{Yong Cheng}
\address{School of Philosophy, Wuhan University, Wuhan 430072 Hubei, Peoples Republic of China}
\email{world-cyr@hotmail.com}
\thanks{I would like to thank Prof.~ Albert Visser for inspiring discussions of the topic in this paper, insightful comments on the original version of the paper  and for introducing some  papers to me. I would also like to thank  Prof.~ Emil Je\v{r}\'{a}bek for his patient explanations of his work to me. I would like to thank Prof.~ Ali Enayat  and Prof.~ V.Yu. Shavrukov for their comments on my paper. I would like to thank the referee for providing detailed and  helpful comments for improvements.  This paper is the research result of The National Social Science Fund of China for general project ``Research on the limit of incompleteness and the intensional problem of incompleteness" (project no: 18BZX131). I would like to thank the fund support by The National Social Science Fund of China for general project.}

\subjclass[2010]{03F40, 03F30, 03D35}

\keywords{G\"{o}del's first incompleteness theorem, Interpretation, Essential undecidability,  Robinson's $\mathbf{R}$}

\begin{document}

\begin{abstract}
In this paper, we  examine the limit of applicability of G\"{o}del's first incompleteness theorem ($\sf G1$ for short). We first define the notion ``$\sf G1$ holds for the theory $T$". This paper is motivated by the following question: can we find a theory with a minimal degree of interpretation for which $\sf G1$  holds. To approach this question, we first examine the following question: is there a theory $T$  such that Robinson's $\mathbf{R}$ interprets $T$ but $T$ does not interpret $\mathbf{R}$ (i.e.~ $T$ is weaker than $\mathbf{R}$ w.r.t.~ interpretation) and $\sf G1$ holds for $T$?
In this paper, we show that there are many such theories based on Je\v{r}\'{a}bek's work using some model theory. We prove that for each recursively inseparable pair $\langle A,B\rangle$,  we can construct a  r.e.~ theory $U_{\langle A,B\rangle}$ such that $U_{\langle A,B\rangle}$ is weaker than $\mathbf{R}$ w.r.t.~ interpretation and  $\sf G1$  holds for $U_{\langle A,B\rangle}$. As a corollary, we answer a question from Albert Visser. Moreover, we prove that for any Turing degree $\mathbf{0}< \mathbf{d}<\mathbf{0}^{\prime}$,  there is a theory $T$ with Turing degree $\mathbf{d}$ such that $\sf G1$  holds for $T$ and  $T$ is weaker  than $\mathbf{R}$ w.r.t.~ Turing reducibility. As a corollary, based on Shoenfield's work using some recursion theory, we show that there is no theory with a minimal degree of Turing reducibility for which $\sf G1$  holds.
\end{abstract}

\maketitle

\section{Introduction}
G\"{o}del's incompleteness theorem is one of the most remarkable results in the foundation of mathematics and has had great influence in logic, philosophy, mathematics, physics and computer science, as discussed in \cite{metamathematics}, \cite{Smith 2007}. G\"{o}del proved his incompleteness theorems in \cite{Godel 1931 original proof} for a certain formal
system $\mathbf{P}$ related to Russell-Whitehead's Principia Mathematica and based on the
simple theory of types over the natural number series and the Dedekind-Peano
axioms (see \cite{Beklemishev 45}, p.3). The following theorem is a modern
reformulation of G\"{o}del's first incompleteness theorem (where $\mathbf{PA}$
refers to the first order theory commonly known as Peano Arithmetic).

\begin{theorem}[G\"{o}del-Rosser, First incompleteness theorem $(\sf G1)$]~
If $T$ is a recursively axiomatized consistent extension of $\mathbf{PA}$, then $T$ is incomplete.
\end{theorem}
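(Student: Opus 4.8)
The plan is to run the classical Gödel--Rosser argument, taking advantage of the fact that $T$ extends $\mathbf{PA}$, so that the full arithmetization of syntax is available with induction. First I would fix a Gödel numbering and, using that $T$ is \emph{recursively axiomatized}, note that the binary relation ``$y$ is the code of a $T$-proof of the formula with code $x$'' is recursive, hence represented in $\mathbf{PA}$ by a $\Sigma_1$ formula $\mathrm{Prf}_T(y,x)$; similarly the map sending the code of a sentence to the code of its negation is representable. The one external input I need is the diagonal lemma: for every formula $\varphi(x)$ there is a sentence $S$ with $\mathbf{PA}\vdash S\leftrightarrow\varphi(\ulcorner S\urcorner)$.

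Next I would build the Rosser sentence. Applying the diagonal lemma to the formula expressing ``every proof of me is matched by a no-larger proof of my negation'', I obtain a sentence $R$ with
\[
\mathbf{PA}\vdash R\leftrightarrow\forall y\,\bigl(\mathrm{Prf}_T(y,\ulcorner R\urcorner)\to\exists z\le y\,\mathrm{Prf}_T(z,\ulcorner\neg R\urcorner)\bigr).
\]
I then claim that $T$ decides neither $R$ nor $\neg R$, which gives incompleteness. Suppose $T\vdash R$, witnessed by a proof with code $m$, so $\mathbf{PA}\vdash\mathrm{Prf}_T(\overline{m},\ulcorner R\urcorner)$. By consistency $T\nvdash\neg R$, so no number codes a proof of $\neg R$; as this is a decidable bounded fact, $\mathbf{PA}\vdash\forall z\le\overline{m}\,\neg\mathrm{Prf}_T(z,\ulcorner\neg R\urcorner)$. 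Instantiating the fixed point at $y=\overline{m}$ then forces $\exists z\le\overline{m}\,\mathrm{Prf}_T(z,\ulcorner\neg R\urcorner)$ inside $T$, contradicting consistency. The symmetric case $T\vdash\neg R$ is handled by Rosser's trick: from a proof of $\neg R$ with code $m$, and consistency ($T\nvdash R$, so no $y\le m$ codes a proof of $R$), I get $\mathbf{PA}\vdash\forall y\le\overline{m}\,\neg\mathrm{Prf}_T(y,\ulcorner R\urcorner)$; unwinding $\neg R$ yields a witness $y$ for $\mathrm{Prf}_T(y,\ulcorner R\urcorner)$ that must therefore exceed $\overline{m}$, whence the clause $\forall z\le y\,\neg\mathrm{Prf}_T(z,\ulcorner\neg R\urcorner)$ evaluated at $z=\overline{m}$ contradicts $\mathbf{PA}\vdash\mathrm{Prf}_T(\overline{m},\ulcorner\neg R\urcorner)$; again $T$ is inconsistent.

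The routine but essential background is the arithmetization: one must verify that the proof relation is genuinely recursive (this is exactly where ``recursively axiomatized'' is used) and that $\mathbf{PA}$ proves the relevant instances of $\Sigma_1$-completeness and decides true bounded formulas, so that the passages from facts about $\mathbb{N}$ to theorems of $T$ are legitimate $\mathbf{PA}$-derivations. I expect the main conceptual obstacle to be precisely Rosser's bounded-quantifier device in the second case, since it is what removes any appeal to $\omega$-consistency (or soundness) and lets the bare consistency of $T$ suffice; everything else is bookkeeping, made comfortable here by the presence of full induction in $T\supseteq\mathbf{PA}$.
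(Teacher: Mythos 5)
Your proof is correct: it is the standard G\"{o}del--Rosser argument (diagonal lemma applied to the Rosser provability predicate, with the bounded-quantifier trick eliminating the need for $\omega$-consistency), and the two cases are handled properly; the only steps you gloss over --- that $\mathbf{PA}$ proves $\forall z\,(z\le\overline{m}\to z=\overline{0}\vee\cdots\vee z=\overline{m})$ and the dichotomy $y\le\overline{m}\vee\overline{m}\le y$ needed to push the witness above $\overline{m}$ in the second case --- are indeed routine with full induction available. Note, however, that the paper does not prove this statement at all: it is quoted in the introduction as the classical theorem, and the machinery the paper actually develops points to a different route. Via Proposition 3.1, ``$\sf G1$ holds for $T$'' is equivalent to essential undecidability of $T$, using the recursion-theoretic facts that every consistent recursively axiomatizable complete theory is decidable and that every incomplete decidable theory has a decidable complete extension; combined with Theorem 2.1(2) (a consistent theory representing all recursive functions is essentially undecidable), this yields incompleteness of every consistent r.e.\ extension of $\mathbf{PA}$ --- indeed of anything interpreting $\mathbf{R}$ --- without ever constructing a self-referential sentence. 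Your approach buys an explicit independent sentence and works ``from the inside''; the paper's Tarski--Mostowski--Robinson-style approach is less explicit but is exactly what lets the paper push $\sf G1$ down to theories far weaker than $\mathbf{PA}$, which is the point of the whole project.
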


The following is a well known open question about $\sf G1$.
\begin{question}\label{big qn}
Exactly how much arithmetical
information from $\mathbf{PA}$ is needed for the proof of $\sf G1$?
\end{question}

The notion of interpretation provides us a method to compare different theories in different languages (for the definition of interpretation, see Section \ref{section 2}).
Given theories $S$ and $T$, \emph{$S\unlhd T$}  denotes that $S$ is interpretable in $T$ (or $T$ interprets $S$) and
\emph{$S\lhd T$} denotes that  $S$ is interpretable in $T$ but $T$ is not interpretable in $S$. We say  that the theory $S$ and $T$ are \emph{mutually interpretable} if $S\unlhd T$ and $T\unlhd S$. In this paper, we equate a set of sentences $\Gamma$ in the language of arithmetic with the set of G\"{o}del's numbers of sentences in $\Gamma$ (see Section 2 for more details about G\"{o}del's number). Given two arithmetic theories $U$ and $V$,   \emph{$U\leq_T V$}  denotes that the theory $U$ as a set of natural numbers is Turing reducible to the theory $V$ as a set of natural numbers, and \emph{$U<_T V$}  denotes that $U\leq_T V$ but $V\nleq_T U$.

Note that $\sf G1$ can be generalized via interpretability: there exists a weak recursively axiomatizable consistent subtheory $T$ (e.g.~ Robinson Arithmetic $\mathbf{Q}$) of $\mathbf{PA}$ such that for each recursively axiomatizable consistent theory $S$, if  $T$ is interpretable  in $S$, then $S$ is incomplete (see \cite{undecidable}). To generalize this fact, in the following, we propose a general new notion ``$\sf G1$ holds for  the theory $T$". 

\begin{definition}~
\begin{enumerate}[(1)]
  \item Let $T$ be a recursively axiomatizable consistent theory. We say that \emph{$\sf G1$ holds for  $T$}  if for any recursively axiomatizable consistent theory $S$, if $T$ is interpretable in $S$, then  $S$ is incomplete.
  \item We say the theory $S$ has a \emph{minimal degree of interpretation} if there is no theory $T$ such that $T\lhd S$.
  \item We say the theory $S$ has a \emph{minimal degree of Turing reducibility} if there is no theory $V$ such that $V<_T S$.
  \item In this paper, whenever we say that
\emph{the theory $S$ is weaker than the theory $T$ w.r.t.~ interpretation}, this means that $S\lhd T$.
\item In this paper, whenever we say that
\emph{the theory $S$ is weaker than the theory $V$ w.r.t.~ Turing reducibility}, this means that $S<_T V$.
\end{enumerate}
\end{definition}

Toward Question \ref{big qn}, in this project, we want to examine the following question:
\begin{question}\label{key qn}
Can we find a theory $S$ such  that $\sf G1$  holds for $S$ and $S$ has a minimal degree of interpretation?
\end{question}


It is well known that $\sf G1$ holds for Robinson Arithmetic $\mathbf{Q}$ (see \cite{undecidable}). From \cite{undecidable}, $\sf G1$ also holds for  Robinson's theory $\mathbf{R}$ (for   definitions of $\mathbf{Q}$ and $\mathbf{R}$, we refer to Section \ref{section 2}).
In Section \ref{section 2}, we review some theories mutually interpretable with $\mathbf{Q}$ for which $\sf G1$ holds, and some theories mutually interpretable with $\mathbf{R}$ for which $\sf G1$ holds. As the first step toward Question \ref{key qn}, we propose the following question:

\begin{question}\label{main qn}
Can we find a theory $S$ such that $\sf G1$ holds for $S$ and $S\lhd \mathbf{R}$?
\end{question}

We find that Je\v{r}\'{a}bek essentially answered this question in \cite{Emil}. In this paper, we show that there are many examples of such a theory $S$: for each recursively inseparable pair $\langle A,B\rangle$, we can construct a r.e.~ theory $U_{\langle A,B\rangle}$ such that $\sf G1$ holds for $U_{\langle A,B\rangle}$ and $U_{\langle A,B\rangle}\lhd \mathbf{R}$ based on Je\v{r}\'{a}bek's work using some model theory. As a corollary, we answer a question from Albert Visser. For  Question \ref{key qn}, if we consider the degree of Turing reducibility instead of the degree of interpretation, the answer becomes easier. We show that for any Turing degree $0< \mathbf{d}<0^{\prime}$, there is a theory $S$ such that $\sf G1$  holds for $S$, $S<_T \mathbf{R}$  and  $S$ has Turing degree $\mathbf{d}$ based on Shoenfield's work using some recursion theory.  As a corollary, there is no theory with a minimal degree of Turing reducibility for which $\sf G1$  holds. 




The structure of  this paper is as follows. In Section 1, we introduce our research questions and  main results of this paper. In Section 2, we list some basic notions and facts we use in this paper and give a review of theories weaker  than $\mathbf{PA}$ w.r.t.~ interpretation in the literature for which $\sf G1$ holds. In Section 3, we examine the limit of applicability of $\sf G1$ w.r.t.~ interpretation. We prove Theorem \ref{main thm} and answer a question from Albert Visser. In Section 4, we examine the limit of applicability of $\sf G1$ w.r.t.~ Turing reducibility and prove Theorem \ref{key theorem} and Corollary \ref{key corollary}.



\section{Preliminaries}\label{section 2}

In this section, we review some basic notions and facts used in this paper. Our notations are standard. 
For books on G\"{o}del's incompleteness theorem, we refer to \cite{Enderton 2001}, \cite{metamathematics}, \cite{Per 97}, \cite{Smith 2007}, \cite{Boolos 93} and \cite{Cheng book 19}. For survey papers on G\"{o}del's incompleteness theorem, we refer to \cite{Beklemishev 45}, \cite{Kotlarski 2004}, \cite{Smorynski 1977}, \cite{Visser 16}, \cite{Cheng 19} and \cite{Cheng 20}. For meta-mathematics of subsystems of $\mathbf{PA}$, we refer to \cite{Metamathematics of First-Order Arithmetic}.

In this paper, a language consists of an arbitrary number of relation and function symbols of arbitrary finite arity.\footnote{We may view nullary functions as constants and nullary relations as propositional
variables.}  For a given theory $T$, we use \emph{$L(T)$} to denote  the language of $T$ and often equate $L(T)$ with the list of non-logical symbols of the language.
For a formula $\phi$ in $L(T)$,  let \emph{$T\vdash\phi$} denote that $\phi$ is provable in $T$ (i.e., there is a finite sequence of formulas $\langle\phi_0, \cdots,\phi_n\rangle$ such that $\phi_n=\phi$, and for any $0\leq i\leq n$, either $\phi_i$ is an axiom of $T$ or $\phi_i$ follows from some $\phi_j\, (j<i)$ by using one inference rule).
Theory $T$ is \emph{consistent} if no contradiction is provable in $T$.   A formula $\phi$ is \emph{independent} of $T$ if $T\nvdash \phi$ and $T\nvdash \neg\phi$.
A theory $T$ is \emph{incomplete}  if there is a sentence $\phi$ in $L(T)$ such that $\phi$ is independent of $T$; otherwise, $T$ is \emph{complete} (i.e., for any sentence $\phi$ in $L(T)$, either $T\vdash\phi$ or $T\vdash \neg\phi$).

In this paper, we
understand that each theory $T$ comes with a fixed arithmetization.
Let $T$ be a recursively axiomatizable theory. Under this fixed arithmetization, we could establish the one-to-one correspondence between formulas of $L(T)$ and natural numbers. Under this correspondence, we can translate metamathematical
statements about the formal theory $T$ into statements
about natural numbers. Furthermore, fundamental metamathematical relations can be translated in this way into
certain recursive relations, hence into relations representable in the theory $T$. Consequently, one can speak about a formal
system of arithmetic and about its properties
as a theory in the system itself! This is the essence of G\"{o}del's
idea of arithmetization.
Under arithmetization, any formula or finite sequence of formulas  can be coded by a natural number (this code is called a G\"{o}del number).
In  this paper, we use \emph{$\ulcorner\phi\urcorner$} to denote the G\"{o}del number of $\phi$.
 For details of arithmetization, we refer to \cite{metamathematics}.

Given a set of sentences $\Sigma$, we say $\Sigma$ is \emph{recursive}  if the set of G\"{o}del numbers of sentences in $\Sigma$ is recursive. A theory $T$ is \emph{decidable}   if the set of sentences provable in $T$ is recursive; otherwise it is \emph{undecidable}. A theory $T$ is \emph{recursively axiomatizable}  if it has a recursive set of axioms (i.e.~ the set of G\"{o}del numbers of axioms of $T$ is recursive) and it is \emph{finitely axiomatized}  if it has a finite set of axioms. A theory $T$ is \emph{recursively enumerable}  (r.e.) if it has a recursively enumerable set of axioms.
A theory $T$
is  \emph{essentially undecidable}  if any recursively axiomatizable consistent extension of $T$ in the same language is undecidable.
A theory $T$ is \emph{essentially incomplete}  if  any recursively axiomatizable consistent extension of $T$ in the same language is incomplete. The theory of completeness/incompleteness  is closely related to the theory of decidability/undecidability.
A theory $T$ is \emph{minimal essentially undecidable}  if $T$ is essentially undecidable and if deleting any axiom of $T$, the remaining theory is no longer essentially undecidable. A theory $T$
is \emph{locally finitely satisfiable} if every finitely axiomatized subtheory of $T$ has
a finite model.

A $n$-ary relation $R(x_1, \cdots, x_n)$ on $\mathbb{N}^n$ is \emph{representable}  in $T$ iff there is a formula $\phi(x_1, \cdots, x_n)$ such that $T\vdash \phi(\overline{m_1}, \cdots, \overline{m_n})$ if $R(m_1, \cdots, m_n)$ holds (for  $n\in \mathbb{N}$, we denote by $\overline{n}$  the corresponding numeral for $n$ in $L(\mathbf{PA})$) and $T\vdash \neg\phi(\overline{m_1}, \cdots, \overline{m_n})$ if $R(m_1, \cdots, m_n)$ does not hold. 
We say that a partial function $f(x_1, \cdots, x_n)$ on $\mathbb{N}^n$ is representable in $T$ iff there is a formula $\varphi(x_1, \cdots, x_n,y)$ such  that $T\vdash \forall y(\varphi(\overline{a_1}, \cdots, \overline{a_{n}},y)\leftrightarrow y=\overline{m})$ whenever $a_1, \cdots, a_n, m\in \mathbb{N}$ are such that $f(a_1, \cdots, a_n)=m$.


For the definitions of  translation and interpretation, we follow \cite{Emil}.
Let $T$ be a theory in a language $L(T)$, and $S$ a theory in a language $L(S)$. In its most simple form,
a \emph{translation}  $I$ of language $L(T)$ into language $L(S)$ is specified by:
 \begin{enumerate}[(1)]
  \item an $L(S)$-formula $\delta_I(x)$ denoting the domain of $I$;
  \item for each relation symbol $R$ of $L(T)$, an $L(S)$-formula $R_I$ of the same arity;
  \item for each function symbol $F$ of $L(T)$ of arity $k$, an $L(S)$-formula $F_I$ of arity $k + 1$.
\end{enumerate}

If $\phi$ is an $L(T)$-formula, its $I$-translation $\phi^I$ is an $L(S)$-formula constructed as follows: we reformulate the
formula in an equivalent way so that function symbols only occur in atomic subformulas of the
form $F(\overline{x}) = y$ where $x_i, y$ are variables; then we replace each such atomic formula with $F_I(\overline{x}, y)$,
we replace each atomic formula of the form $R(\overline{x})$ with $R_I(\overline{x})$, and we restrict all quantifiers and
free variables to objects satisfying $\delta_I$. Moreover, we rename bound variables to avoid
variable clashes during the process (see \cite{Emil}).

A translation $I$ of $L(T)$ into $L(S)$ is an \emph{interpretation}  of $T$ in $S$ if $S$ proves:
 \begin{enumerate}[(1)]
  \item for each function symbol $F$ of $L(T)$ of arity $k$, the formula expressing that $F_I$ is total on $\delta_I$:
$\forall x_0, \cdots \forall x_{k-1} (\delta_I(x_0) \wedge \cdots \wedge \delta_I(x_{k-1}) \rightarrow \exists y (\delta_I(y) \wedge F_I(x_0, \cdots, x_{k-1}, y)))$;
  \item the $I$-translations of all theorems of $T$, and axioms of equality.
\end{enumerate}
The simplified picture of translations and interpretations above actually describes only \emph{one-dimensional}, \emph{parameter-free}, and \emph{one-piece} translations (see \cite{Emil}). For the precise and technical
definitions of a \emph{multi-dimensional} interpretation, an interpretation \emph{with parameters}   and a \emph{piece-wise} interpretation, we refer to \cite{paper on Q}, \cite{Visser 11} and \cite{Visser 14}   for the details.

A theory $T$ is \emph{interpretable}  in a theory $S$ if there exists an
interpretation  of $T$ in $S$. If $T$ is interpretable in $S$, then all sentences provable (refutable) in $T$ are mapped, by the interpretation function, to sentences provable (refutable) in $S$. Interpretability can be accepted as a measure of strength of different theories. If $S\lhd T$, then $S$ can
be considered weaker than $T$ w.r.t.~ interpretation; if $S$ and $T$ are mutually interpretable, then $T$ and $S$ are equally strong w.r.t.~ interpretation.
The theory $U$ \emph{weakly interprets}  the theory $V$ (or $V$ is weakly interpretable in $U$) if $V$ is interpretable in some consistent extension  of $U$ in the same language (or equivalently, for some interpretation $\tau$, the
theory $U + V^{\tau}$ is consistent).


A general method for establishing the undecidability of theories is developed in \cite{undecidable}.
The following theorem provides us two methods to prove the essentially undecidability of a theory via interpretation and representability.

\begin{theorem}\label{interpretable theorem}~
\begin{enumerate}[(1)]
  \item (\cite[Theorem 7, p.22]{undecidable})   Let $T_1$ and $T_2$ be two theories such that $T_1$ is consistent and $T_2$ is interpretable in $T_1$. We then have: if $T_2$ is essentially undecidable, then $T_1$ is also essentially undecidable.
  \item (\cite[Corollary 2, p.49]{undecidable})   If $T$ is a consistent theory in which all recursive functions are representable, then $T$ is essentially undecidable.
\end{enumerate}
\end{theorem}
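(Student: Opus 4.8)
The plan is to treat the statement as two independent claims and to prove each by reducing a hypothetical decision procedure to an impossibility. Throughout I use that a theory $U$ is \emph{essentially undecidable} exactly when every consistent recursively axiomatizable extension of $U$ in $L(U)$ has a non-recursive set of theorems, so in each part it suffices to take an arbitrary such extension and derive a contradiction from the assumption that it is decidable.

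For part (1) I would argue by contraposition, transferring decidability backwards along the interpretation. Fix an interpretation $I$ of $T_2$ in $T_1$, so that $T_1 \vdash \phi^I$ whenever $T_2 \vdash \phi$ and the syntactic map $\phi \mapsto \phi^I$ is primitive recursive. Suppose $T_1^\ast$ is a consistent decidable recursively axiomatizable extension of $T_1$ in $L(T_1)$, and define the pullback theory $T_2^\ast = \{\phi \in L(T_2) : T_1^\ast \vdash \phi^I\}$. Then $T_2^\ast$ extends $T_2$, since $T_2 \vdash \phi$ gives $T_1 \vdash \phi^I$ and hence $T_1^\ast \vdash \phi^I$; it is consistent, because proving both $\phi$ and $\neg\phi$ in $T_2^\ast$ would, via the provable behaviour of $I$-translation under negation, force $T_1^\ast$ to prove $\phi^I \wedge \neg(\phi^I)$; and it is decidable, since membership is tested by computing $\phi^I$ and invoking the decision procedure for $T_1^\ast$. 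A decidable theory is its own recursive axiom set, so $T_2^\ast$ is a consistent decidable recursively axiomatizable extension of $T_2$, contradicting the essential undecidability of $T_2$.

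For part (2) I would run the usual diagonal argument driven by representability. Let $T^\ast$ be any consistent recursively axiomatizable extension of $T$ in $L(T)$ and suppose toward a contradiction that $T^\ast$ is decidable. Since all recursive functions are representable in $T$, every recursive relation is representable in $T$ (a standard reduction through characteristic functions in a consistent theory), and representability is inherited by the consistent extension $T^\ast$. Fix an effective enumeration $\phi_0, \phi_1, \ldots$ of the one-variable $L(T)$-formulas and put $D = \{n : T^\ast \vdash \neg\phi_n(\overline{n})\}$. Decidability of $T^\ast$ makes $D$ recursive, so $D$ is represented by some $\delta(x) = \phi_k(x)$. If $k \in D$, then $T^\ast$ proves both $\neg\phi_k(\overline{k})$ and, by representability, $\delta(\overline{k}) = \phi_k(\overline{k})$; if $k \notin D$, then representability forces $T^\ast \vdash \neg\phi_k(\overline{k})$, i.e. $k \in D$. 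Either branch contradicts the consistency of $T^\ast$, so no such $T^\ast$ is decidable.

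I expect the one genuinely delicate point to be the consistency step in part (1): the identity $(\neg\phi)^I \equiv \neg(\phi^I)$ holds only modulo relativization to $\delta_I$ together with the totality and domain-nonemptiness facts that the definition of interpretation guarantees $T_1$ (hence $T_1^\ast$) proves, so the consistency of the pullback theory must be derived through these provable facts rather than from a naive syntactic cancellation. The diagonalization in part (2) is routine once representability has been moved to $T^\ast$; the only feature worth stating explicitly is that it is precisely the consistency of $T^\ast$ that blocks the representing formula from proving an instance and its negation at once.
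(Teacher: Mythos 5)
The paper itself gives no proof of this theorem: both parts are imported from Tarski--Mostowski--Robinson with page references, so there is no in-paper argument to compare yours against. Judged on its own, your part (1) is correct and is the standard pullback argument. The only points worth making explicit are that $T_2^{\ast}=\{\phi : T_1^{\ast}\vdash\phi^I\}$ is deductively closed (because $T_1^{\ast}$ proves the $I$-translations of the equality axioms, the totality conditions for the $F_I$, and the non-emptiness of $\delta_I$, so $I$-translation commutes with derivability), which is what licenses ``a decidable theory is its own recursive axiom set''; and the relativization issue for negation, which you correctly isolate as the delicate step, is discharged by exactly these provable facts. Your diagonalization in part (2) is also the standard one and is formally fine once relation-representability of the recursive set $D$ is in hand.

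The one genuine soft spot is the step in part (2) where you pass from function-representability to relation-representability ``through characteristic functions in a consistent theory''. That reduction needs $T\vdash \overline{0}\neq\overline{1}$: from $T\vdash\forall y(\varphi(\overline{n},y)\leftrightarrow y=\overline{0})$ one can conclude $T\vdash\neg\varphi(\overline{n},\overline{1})$ only if the two numerals are provably distinct, and consistency alone does not supply this under the paper's stated definition of representability. Indeed, with that definition taken literally, the complete theory of a one-element structure is consistent, decidable, and represents every partial function (take $\varphi:=(y=y)$), so part (2) is false without some provable-distinctness assumption on numerals. This is a defect of the statement as transcribed rather than of your strategy, and it is harmless in every application the paper makes of part (2) --- $\mathbf{R}$ has $\overline{m}\neq\overline{n}$ as axiom scheme $\sf{Ax3}$, and $\mathbf{Rep_{\sf PRF}}$ postulates it as its first axiom group --- but your parenthetical ``in a consistent theory'' is precisely where the proof would break. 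You should either add $T\vdash\overline{m}\neq\overline{n}$ for $m\neq n$ as a hypothesis, or note that the intended notion of representability builds it in.
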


In Section 3, we will show that $\sf G1$  holds for the theory $T$ iff $T$ is essentially undecidable. In the following, we review some theories from the literature which are weaker than $\mathbf{PA}$ w.r.t.~ interpretation and which are essentially undecidable (i.e.~ $\sf G1$  holds for them).

Robinson Arithmetic $\mathbf{Q}$ was introduced in \cite{undecidable} by Tarski, Mostowski and
Robinson  as a base axiomatic theory for investigating incompleteness and undecidability.

\begin{definition}\label{def of Q}
Robinson Arithmetic $\mathbf{Q}$  is  defined in   the language $\{\mathbf{0}, \mathbf{S}, +, \cdot\}$ with the following axioms:
\begin{description}
  \item[$\mathbf{Q}_1$] $\forall x \forall y(\mathbf{S}x=\mathbf{S} y\rightarrow x=y)$;
  \item[$\mathbf{Q}_2$] $\forall x(\mathbf{S} x\neq \mathbf{0})$;
  \item[$\mathbf{Q}_3$] $\forall x(x\neq \mathbf{0}\rightarrow \exists y x=\mathbf{S} y)$;
  \item[$\mathbf{Q}_4$]  $\forall x (x+ \mathbf{0}=x)$;
  \item[$\mathbf{Q}_5$] $\forall x\forall y(x+ \mathbf{S} y=\mathbf{S} (x+y))$;
  \item[$\mathbf{Q}_6$] $\forall x(x\cdot \mathbf{0}=\mathbf{0})$;
  \item[$\mathbf{Q}_7$] $\forall x\forall y(x\cdot \mathbf{S} y=x\cdot y +x)$.
\end{description}
\end{definition}

Robinson Arithmetic $\mathbf{Q}$ is  very weak and inadequate to formalize arithmetic: for instance, $\mathbf{Q}$ does not
 even prove that addition is associative.
Robinson showed that any consistent theory that interprets $\mathbf{Q}$ is undecidable and hence
$\mathbf{Q}$ is essentially undecidable (see \cite{undecidable}). The fact that $\mathbf{Q}$ is  essentially undecidable is very useful and can be used to prove the  essentially undecidability of other theories via Theorem \ref{interpretable theorem}. Since $\mathbf{Q}$ is finitely axiomatized, it follows that any
theory that weakly interprets $\mathbf{Q}$ is undecidable. In fact, $\mathbf{Q}$ is minimal essentially undecidable in the sense that if deleting any axiom of $\mathbf{Q}$, then the remaining theory is not essentially undecidable and has a complete decidable extension (see \cite[Theorem 11, p.62]{undecidable}). Nelson  \cite{Nelson 86} embarked  on a program   of investigating how much mathematics can
 be interpreted in Robinson Arithmetic $\mathbf{Q}$: what can be interpreted in $\mathbf{Q}$ but also what cannot  be  interpreted in $\mathbf{Q}$. In fact, $\mathbf{Q}$ represents a rich degree of interpretability since a lot of stronger theories are interpretable in it as it can be shown (e.g.~ using Solovay's
method of shortening cuts (see \cite{Guaspari 79}), one can show that $\mathbf{Q}$ interprets fairly strong theories like $I\Delta_0 + \Omega_1$ on a definable cut).
The Lindenbaum algebras of all recursively enumerable theories that interpret $\mathbf{Q}$ are recursively isomorphic (see Pour-El and Kripke \cite{Kripke 67}).

The theory $\mathbf{PA}$ consists of axioms $\mathbf{Q}_1$-$\mathbf{Q}_2$, $\mathbf{Q}_4$-$\mathbf{Q}_7$ in Definition \ref{def of Q} and the following axiom scheme of induction: $(\phi(\mathbf{0})\wedge \forall x(\phi(x)\rightarrow \phi(\mathbf{S} x)))\rightarrow \forall x \, \phi(x)$  where $\phi$ is a formula with at least one free variable $x$.

Now we first discuss some prominent fragments of $\mathbf{PA}$ extending $\mathbf{Q}$ in the literature.

We define the arithmetic hierarchy $I\Sigma_n$ and $B\Sigma_{n}$ in the language of   $\mathbf{PA}$. An $L(\mathbf{PA})$-formula is bounded (or $\Delta_0$ formula) if all quantifiers occuring in it are bounded, i.e.~ in the form $(\exists x \leq y)\phi$ and $(\forall x \leq y)\phi$. For the definitions of $\Sigma_{n}, \Pi_{n}$ and $\Delta_{n}$ formulas ($n\geq 1$), we refer to \cite{metamathematics}.
\emph{Collection} for $\Sigma^0_{n+1}$ formulas is the following principle:
\[(\forall x< u)(\exists y) \varphi(x,y)\rightarrow (\exists v)(\forall x< u)(\exists y< v) \varphi(x,y),\]
where
$\varphi(x,y)$ is a $\Sigma^0_{n+1}$ formula possibly containing parameters distinct from $u,v$.

The theory $I\Sigma_n$ is $\mathbf{Q}$ plus induction for $\Sigma_n$ formulas and $B\Sigma_{n+1}$ is $I\Sigma_0$ plus collection for $\Sigma_{n+1}$ formulas. It is well known that the following theories form a strictly increasing hierarchy:
\[I\Sigma_0, B\Sigma_{1}, I\Sigma_1, B\Sigma_{2},\cdots I\Sigma_n, B\Sigma_{n+1}, \cdots, \mathbf{PA}.\]

Define $\omega_1(x)=x^{\mid x\mid}$ and
$\omega_{n+1}(x)=2^{\omega_{n}(\mid x\mid)}$ where $|x|$ is the length of the binary expression of $x$. Note that the graphs of these functions
can be defined in our language with the recursive defining equation
provable (see \cite{Metamathematics of First-Order Arithmetic}).
Let $\Omega_{n}$ denote the statement $\forall x\exists y (\omega_{n}(x) =y)$ which says that $\omega_{n}(x)$ is total.
There is a bounded formula ${\sf Exp(x, y, z)}$ such that $I\Sigma_0$ proves that  ${\sf Exp(x, 0, z)} \leftrightarrow z= 1$ and ${\sf Exp(x, Sy, z)} \leftrightarrow \exists t ({\sf Exp(x, y, t)} \wedge z=t\cdot x)$ (see \cite[Proposition 2, p.299]{Interpretability in Robinson's Q}). However,  $I\Sigma_0$ cannot prove  the totality of ${\sf Exp(x, y, z)}$. Let $\mathbf{exp}$ denote the statement postulating the totality of the exponential function $\forall x\forall y\exists z  {\sf Exp(x, y, z)}$.

\begin{theorem}[\cite{Petr 93}, \cite{Interpretability in Robinson's Q}]~
\begin{enumerate}[(1)]
  \item  For any $n\geq 1$, $I\Sigma_0+\Omega_n$ is interpretable in $\mathbf{Q}$ (see \cite[Theorem 3, p.304]{Interpretability in Robinson's Q}).
  \item $I\Sigma_0+ \mathbf{exp}$ is not interpretable in $\mathbf{Q}$.\footnote{See \cite[Theorem 6, p.313]{Interpretability in Robinson's Q}. Solovay proved that $I\Sigma_0 + \neg \mathbf{exp}$ is interpretable in $\mathbf{Q}$ (see \cite[Theorem 7, p.314]{Interpretability in Robinson's Q}).}
  \item  $I\Sigma_1$ is not interpretable in $I\Sigma_0+ \mathbf{exp}$ (see \cite[Theorem 1.1]{Petr 93}, p.186).
      \item $I\Sigma_{n+1}$ is not interpretable in $B\Sigma_{n+1}$ (see \cite[Theorem 1.2]{Petr 93}, p.186).
      \item  $B\Sigma_{1}+ \mathbf{exp}$ is interpretable in $I\Sigma_{0}+ \mathbf{exp}$ (see \cite[Theorem 2.4]{Petr 93}, p.188).
      \item For each $n\geq 1$, $B\Sigma_{1}+\Omega_{n}$ is interpretable in $I\Sigma_{0}+\Omega_{n}$ (see \cite[Theorem 2.5]{Petr 93}, p.189).
      \item  For each $n\geq 0$, $B\Sigma_{n+1}$ is interpretable in $I\Sigma_{n}$ (see \cite[Theorem 2.6]{Petr 93}, p.189).
\end{enumerate}
\end{theorem}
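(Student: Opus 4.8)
Since the seven statements collect standard facts from the metamathematics of fragments of arithmetic, the plan is to prove them by two complementary families of techniques: Solovay's method of shortening definable cuts for the positive interpretability results (1), (5), (6) and (7), and the Orey--H\'{a}jek characterization of interpretability together with Pudl\'{a}k's version of G\"{o}del's second incompleteness theorem for cuts for the negative results (2), (3) and (4).

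For (1) I would start with an arbitrary model $M \models \mathbf{Q}$ and produce a definable initial segment (cut) $I \subseteq M$ that is closed under $\mathbf{S}, +, \cdot$ and, crucially, under $\omega_n$, and on which every $\Delta_0$ instance of induction holds. The engine is the cut-shortening lemma: from a cut closed under a function $f$ one extracts a shorter definable cut closed under a faster-growing function while retaining closure and $\Delta_0$-induction. Iterating this finitely many times pushes closure from multiplication up to totality of $\omega_n$; the restriction of $M$ to the final cut then interprets $I\Sigma_0 + \Omega_n$. Statements (5), (6) and (7) are variants of the same construction: inside a model of $I\Sigma_0 + \mathbf{exp}$ (resp.\ $I\Sigma_0 + \Omega_n$, resp.\ $I\Sigma_n$) one builds a definable cut on which the collection scheme $B\Sigma_1$ (resp.\ $B\Sigma_{n+1}$) becomes valid while the ambient growth axiom survives, yielding the required interpretation.

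For the negative results I would use the fact that, for the sequential theories in question, $T$ interprets $S$ iff $T$ proves, on a definable cut, the consistency of every finite subtheory of $S$ (Orey--H\'{a}jek), combined with Pudl\'{a}k's theorem that no sequential theory proves its own consistency even on a cut. Concretely: for (2) a single finite fragment of $I\Sigma_0 + \mathbf{exp}$ already proves $\mathrm{Con}(\mathbf{Q})$ (cut-free/Herbrand consistency of the finite theory $\mathbf{Q}$ is available once exponentiation is total), so an interpretation of $I\Sigma_0 + \mathbf{exp}$ in $\mathbf{Q}$ would make $\mathbf{Q}$ prove its own consistency on a cut, which is impossible. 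For (3) the same scheme applies with the nontrivial input that a finite fragment of $I\Sigma_1$ proves $\mathrm{Con}(I\Sigma_0 + \mathbf{exp})$ via superexponential cut-elimination; an interpretation would then yield that $I\Sigma_0 + \mathbf{exp}$ proves its own consistency on a cut. For (4) I would route through (7): if $B\Sigma_{n+1}$ interpreted $I\Sigma_{n+1}$, then by (7) and transitivity of interpretability $I\Sigma_n$ would interpret $I\Sigma_{n+1}$; but $I\Sigma_{n+1}$ proves $\mathrm{Con}(I\Sigma_n)$ by $\Sigma_{n+1}$-reflection, so $I\Sigma_n$ would prove its own consistency on a cut, again contradicting Pudl\'{a}k's theorem.

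The main obstacle is the positive half, specifically the cut-shortening in (1): organizing the finitely many shortenings so that $\Delta_0$-induction is genuinely preserved at each stage while the cut becomes closed under the rapidly growing $\omega_n$, and doing the bookkeeping uniformly so that the construction yields an actual interpretation rather than merely a model-theoretic statement, is where almost all the work lies. On the negative side the only delicate points are verifying the precise provable-consistency facts (which finite subtheory proves which consistency statement) and invoking the correct cut version of G\"{o}del's second incompleteness theorem; once these are in hand, the contradictions are immediate.
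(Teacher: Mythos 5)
The paper does not prove this theorem at all: it is stated purely as a survey of results imported from H\'{a}jek \cite{Petr 93} and Ferreira--Ferreira \cite{Interpretability in Robinson's Q}, with each item carrying its own citation, and the paper only uses the statements downstream (to set up the mutual-interpretability picture and the chain $\mathbf{Q}\lhd I\Sigma_0+\mathbf{exp}\lhd I\Sigma_1\lhd\cdots\lhd\mathbf{PA}$). So there is no in-paper proof to compare yours against. Judged against the cited literature, your outline is the standard one: Solovay-style shortening of definable cuts for the positive items (1), (5), (6), (7), and the Orey--H\'{a}jek/Pudl\'{a}k machinery (an interpretation yields consistency-on-a-definable-cut, which no consistent sequential theory can prove of itself) for the negative items, including the correct reduction of (4) to (7) together with $I\Sigma_{n+1}\vdash\mathrm{Con}(I\Sigma_n)$. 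Two places where your sketch understates the real work. First, in (1), ``iterating the cut-shortening finitely many times'' does not by itself secure $\Delta_0$-induction on a single definable cut: there are infinitely many $\Delta_0$ instances, so one cannot shorten once per formula; the actual argument needs a uniform device (Nelson/Wilkie--Paris style relativization, or a universal $\Delta_0$ formula evaluated on a logarithmic cut). Second, in (2)--(3) one must remember that $I\Sigma_0+\mathbf{exp}$ does \emph{not} prove $\mathrm{Con}(\mathbf{Q})$ outright (Wilkie--Paris), so the argument has to run through a restricted (cut-free, Herbrand, or tableau) consistency notion and the corresponding restricted form of the second incompleteness theorem for sequential theories; your parenthetical hedge toward Herbrand consistency is exactly where the care is needed, and the choice of consistency notion there is delicate rather than routine. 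With those caveats, the proposal is a faithful reconstruction of the proofs the paper is citing rather than an alternative to anything the paper itself contains.
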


As a corollary,  we have:
\begin{enumerate}[(1)]
  \item The theories $\mathbf{Q},  I\Sigma_0, I\Sigma_0+\Omega_{1}, \cdots, I\Sigma_0+\Omega_{n}, \cdots, B\Sigma_1, B\Sigma_1+\Omega_{1}, \cdots, B\Sigma_1+\Omega_{n}, \cdots$ are all mutually interpretable;
  \item $I\Sigma_0 + \mathbf{exp}$ and $B\Sigma_1 + \mathbf{exp}$ are  mutually interpretable;
   \item For $n\geq 1$, $I\Sigma_n$ and $B\Sigma_{n+1}$ are  mutually interpretable;
    \item  $\mathbf{Q}\lhd I\Sigma_0 + \mathbf{exp}\lhd I\Sigma_1\lhd I\Sigma_2\lhd\cdots\lhd I\Sigma_n\lhd\cdots\lhd \mathbf{PA}$.
\end{enumerate}


Now we discuss some weak theories in the literature which are  mutually interpretable with  $\mathbf{Q}$.

It is interesting to compare $\mathbf{Q}$ with its bigger brother $\mathbf{PA}^{-}$.
The theory $\mathbf{PA}^{-}$ is the theory of commutative, discretely
ordered semi-rings with a minimal element plus the subtraction axiom. The theory $\mathbf{PA}^{-}$ has the following axioms with $L(\mathbf{PA}^{-})=L(\mathbf{PA})\cup\{\leq\}$: (1) $x + 0 = x$; (2) $x + y = y + x$; (3) $(x + y) + z = x + (y + z)$; (4) $x \cdot 1 = x$; (5) $x \cdot y = y \cdot x$; (6)$(x \cdot y) \cdot z = x \cdot (y \cdot z)$; (7) $x \cdot (y + z) = x \cdot y + x \cdot z$; (8)$x \leq y \vee y \leq x$; (9) $(x \leq y \wedge y \leq z)\rightarrow x \leq z$; (10) $x + 1 \nleq x$; (11) $x \leq y \rightarrow (x = y \vee x + 1 \leq y)$; (12) $x \leq y \rightarrow x + z \leq y + z$; (13) $x \leq y \rightarrow x \cdot z \leq y \cdot z$; (14)$x \leq y \rightarrow \exists z (x + z = y)$.
From \cite{paper on Q}, $\mathbf{PA}^{-}$ is interpretable in $\mathbf{Q}$, and hence $\mathbf{PA}^{-}$ is mutually interpretable with $\mathbf{Q}$.

Let $\mathbf{Q}^+$ be the extension of $\mathbf{Q}$ with the following extra axioms ($L(\mathbf{Q}^+)=L(\mathbf{Q})\cup\{\leq\}$):
\begin{description}
  \item[$\mathbf{Q}_8$] $(x + y) + z = x + (y + z)$;
  \item[$\mathbf{Q}_9$] $x \cdot (y + z) = x \cdot y + x \cdot z$;
  \item[$\mathbf{Q}_{10}$] $(x \cdot y) \cdot z = x\cdot (y\cdot z)$;
  \item[$\mathbf{Q}_{11}$] $x + y = y + x$;
  \item[$\mathbf{Q}_{12}$] $x\cdot y = y \cdot x$;
  \item[$\mathbf{Q}_{13}$] $x\leq y\leftrightarrow \exists z (x + z = y)$.
\end{description}
The theory $\mathbf{Q}^+$ is interpretable in $\mathbf{Q}$ (see \cite[Theorem 1]{Interpretability in Robinson's Q}, p.296), and hence $\mathbf{Q}^+$ is mutually interpretable with $\mathbf{Q}$.

Andrzej Grzegorczyk considered a theory $\mathbf{Q}^{-}$ in which addition and multiplication
do satisfy natural reformulations of axioms of $\mathbf{Q}$ but are possibly non-total
functions. More exactly, the language of $\mathbf{Q}^{-}$ is $\{\mathbf{0}, \mathbf{S}, A, M\}$ where $A$ and $M$
are ternary relations, and the axioms of $\mathbf{Q}^{-}$ are the axioms $\mathbf{Q}_1$-$\mathbf{Q}_3$ of $\mathbf{Q}$ plus
the following six axioms about $A$ and $M$:
\begin{description}
  \item[A] $\forall x\forall y\forall z_1\forall z_2(A(x, y, z_1)\wedge A(x, y, z_2) \rightarrow z_1 = z_2)$;
  \item[M] $\forall x\forall y\forall z_1\forall z_2(M(x, y, z_1) \wedge M(x, y, z_2) \rightarrow z_1 = z_2)$;
  \item[G4] $\forall x \, A(x, \mathbf{0}, x)$;
  \item[G5] $\forall x\forall y\forall z(\exists u(A(x, y, u) \wedge z = \mathbf{S}(u)) \rightarrow A(x, \mathbf{S}(y), z))$;
  \item[G6] $\forall x\, M(x, \mathbf{0}, \mathbf{0})$;
  \item[G7] $\forall x\forall y\forall z(\exists u(M(x, y, u) \wedge A(u, x, z)) \rightarrow M(x, \mathbf{S}(y), z))$.
\end{description}

Andrzej Grzegorczyk asked whether $\mathbf{Q}^{-}$ is essentially undecidable. Petr H\'{a}jek
considered a somewhat stronger theory with axioms
\begin{description}
  \item[H5] $\forall x\forall y\forall z(\exists u(A(x, y, u) \wedge z = \mathbf{S}(u)) \leftrightarrow A(x, \mathbf{S}(y), z))$ and
  \item[H7] $\forall x\forall y\forall z(\exists u(M(x, y, u) \wedge A(u, x, z)) \leftrightarrow M(x, \mathbf{S}(y), z))$
\end{description}
instead of $\mathbf{G5}$ and $\mathbf{G7}$. He showed that this stronger variant of $\mathbf{Q}^{-}$ is essentially undecidable (see \cite{Hajek 07}).
V\'{i}t\v{e}zslav \v{S}vejdar provided a positive answer to Grzegorczyk's original question in \cite{Svejdar 07} and proved that $\mathbf{Q}$ is interpretable in $\mathbf{Q}^{-}$ using the Solovay's method of shortening  cuts (and hence $\mathbf{Q}^{-}$ is essentially undecidable). Thus, $\mathbf{Q}^{-}$ is mutually interpretable with $\mathbf{Q}$.

Andrzej Grzegorczyk proposed the theory of concatenation ($\mathbf{TC}$) in \cite{Grzegorczyk 05} as a possible alternative theory for studying incompleteness
and undecidability. Unlike Robinson (or Peano) Arithmetic, where
the individuals are numbers that can be added or multiplied, in $\mathbf{TC}$ one has strings (or texts) that can be concatenated. We refer to \cite{Grzegorczyk 05} for Grzegorczyk's philosophical motivations to study $\mathbf{TC}$.


The theory $\mathbf{TC}$ has the language $\{\frown, \alpha, \beta, =\}$ with a binary function
symbol and two constants, and the following axioms:
\begin{description}
  \item[TC1] $\forall x\forall y\forall z(x\frown (y\frown z) = (x\frown y)\frown z)$;
  \item[TC2] $\forall x\forall y\forall u\forall v(x\frown y = u\frown v \rightarrow ((x = u \wedge y = v) \vee\exists w((u = x\frown w \wedge w\frown v= y) \vee (x = u\frown w \wedge w\frown y = v))))$;
  \item[TC3] $\forall x\forall y(\alpha \neq x\frown y)$;
  \item[TC4] $\forall x\forall y(\beta \neq x\frown y)$;
  \item[TC5] $\alpha \neq\beta$.
\end{description}

Grzegorczyk \cite{Grzegorczyk 05} proved (mere) undecidability of the theory $\mathbf{TC}$. Grzegorczyk and Zdanowski \cite{Grzegorczyk 08} proved that $\mathbf{TC}$ is essentially undecidable. However, \cite{Grzegorczyk 08} leaves an interesting unanswered question: are $\mathbf{TC}$ and $\mathbf{Q}$ mutually interpretable? \v{S}vejdar \cite{Svejdar 09} showed  that $\mathbf{Q}^{-}$ is interpretable in  $\mathbf{TC}$ and hence $\mathbf{Q}$ is interpretable in $\mathbf{TC}$ since $\mathbf{Q}$ is interpretable in $\mathbf{Q}^{-}$.
Ganea  \cite{Ganea 09} gave a different proof   of the interpretability of $\mathbf{Q}$ in $\mathbf{TC}$,
but he also used the detour via $\mathbf{Q}^{-}$. Visser \cite{Visser 09} gave a proof of the interpretability of $\mathbf{Q}$ in $\mathbf{TC}$ not using $\mathbf{Q}^{-}$. Note that $\mathbf{TC}$ is easily interpretable in the bounded arithmetic $I\Sigma_0$. Thus, $\mathbf{TC}$ is mutually interpretable with $\mathbf{Q}$.

Adjunctive Set Theory ($\mathbf{AS}$) is the following theory in the language with only one binary relation
symbol $\in$.
\begin{description}
  \item[AS1] $\exists x \forall y (y \notin x)$.
  \item[AS2] $\forall x\forall y \exists z \forall u (u \in z \leftrightarrow (u = x \vee u = y))$.
\end{description}

The theory $\mathbf{AS}$ interprets Robinson's Arithmetic $\mathbf{Q}$, and hence is essentially undecidable.
Nelson \cite{Nelson 86} showed that $\mathbf{AS}$ is interpretable in  $\mathbf{Q}$. Thus, $\mathbf{AS}$ is mutually interpretable with $\mathbf{Q}$.

The theory $\mathbf{S^1_2}$ is a finitely axiomatizable weak arithmetic introduced by Buss in \cite{Buss 86} to study polynomial time computability. The theory $\mathbf{S^1_2}$ gives us what we need to formalize the proof of the Second Incompleteness Theorem in a
natural and effortless way. In fact, it is easier to do it in $\mathbf{S^1_2}$ than in
$\mathbf{PA}$, since the restrictions present in $\mathbf{S^1_2}$ prevent one from making wrong turns and inefficient choices (see \cite{paper on Q}). From \cite{Interpretability in Robinson's Q},  $I\Sigma_0$ is interpretable in  $\mathbf{S^1_2}$  and $\mathbf{S^1_2}$ is interpretable in $\mathbf{Q}$. Thus, $\mathbf{S^1_2}$ is mutually interpretable with $\mathbf{Q}$.


Now, we introduce Robinson's theory $\mathbf{R}$ introduced by Tarski, Mostowski and Robinson in \cite{undecidable}, and some variants of $\mathbf{R}$.
\begin{definition}
Let $\mathbf{R}$ be the theory consisting of schemes $\sf{Ax1}$-$\sf{Ax5}$ with $L(\mathbf{R})=\{\mathbf{0}, \mathbf{S}, +, \cdot, \leq\}$ where  $\leq$ is a primitive binary relation  symbol and  $\overline{n}=\mathbf{S}^n \mathbf{0}$ for $n \in \mathbb{N}$:
\begin{description}
  \item[\sf{Ax1}] $\overline{m}+\overline{n}=\overline{m+n}$;
  \item[\sf{Ax2}] $\overline{m}\cdot\overline{n}=\overline{m\cdot n}$;
  \item[\sf{Ax3}] $\overline{m}\neq\overline{n}$, if $m\neq n$;
  \item[\sf{Ax4}] $\forall x(x\leq \overline{n}\rightarrow x=\overline{0}\vee \cdots \vee x=\overline{n})$;
  \item[\sf{Ax5}] $\forall x(x\leq \overline{n}\vee \overline{n}\leq x)$.
\end{description}
\end{definition}


The axiom schemes of $\mathbf{R}$ contain all key properties of arithmetic for the proof of $\sf G1$.
The theory $\mathbf{R}$ is not finitely axiomatizable. Note that $\mathbf{R}\lhd\mathbf{Q}$ since $\mathbf{Q}$ is not interpretable in $\mathbf{R}$: if $\mathbf{Q}$ is interpretable in $\mathbf{R}$, then it is interpretable in some finite fragment of $\mathbf{R}$; however  $\mathbf{R}$ is locally finitely satisfiable but any model of $\mathbf{Q}$ is infinite. Visser \cite{Visser 14} proved  the following universal property of $\mathbf{R}$ which provides a unique characterization of $\mathbf{R}$.

\begin{theorem}[Visser, Theorem 6, \cite{Visser 14}]\label{visser thm on R}
For any r.e.~ theory $T$,  $T$ is locally finitely satisfiable iff $T$ is interpretable in $\mathbf{R}$.\footnote{In fact, if $T$ is locally finitely satisfiable, then $T$ is interpretable in $\mathbf{R}$ via a one-piece one-dimensional parameter-free interpretation.}
\end{theorem}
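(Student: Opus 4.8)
The statement has two directions; the forward direction (interpretability implies local finite satisfiability) is the routine one, so I would dispose of it first. Suppose $T\unlhd\mathbf{R}$ via an interpretation $I$, and let $T_0=\{\sigma_1,\dots,\sigma_k\}$ be any finitely axiomatized subtheory of $T$. Being an interpretation, $I$ gives $\mathbf{R}\vdash\sigma_j^I$ for each $j$, together with $\mathbf{R}\vdash$ (totality of the interpreted function symbols on $\delta_I$) and the relativized equality axioms. Each of these finitely many theorems uses only finitely many axioms of $\mathbf{R}$ in its proof, so I would collect them into a single finite subtheory $\mathbf{R}'\subseteq\mathbf{R}$ that already witnesses $I$ as an interpretation of $T_0$. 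Since $\mathbf{R}$ is locally finitely satisfiable, $\mathbf{R}'$ has a finite model $\mathcal{M}$; applying $I$ inside $\mathcal{M}$ produces an internal model of $T_0$ whose domain is carved out of a finite power of the finite universe of $\mathcal{M}$, hence is finite. As $T_0$ was arbitrary, $T$ is locally finitely satisfiable. (If $I$ carries parameters, one simply takes $\mathbf{R}'$ to also prove the existence of admissible parameter values and chooses $\mathcal{M}$ accordingly.)

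The substantive direction is the converse. Assume $T$ is r.e.\ and locally finitely satisfiable. First I would extract a \emph{recursive} sequence of finite models: enumerate the axioms of $T$ recursively as $\tau_0,\tau_1,\dots$, set $T_n=\{\tau_i:i<n\}$, and search through all finite $L(T)$-structures on domains $\{0,\dots,s\}$ (for increasing $s$), testing the decidable condition ``structure $\models T_n$''. Local finite satisfiability guarantees the search halts, yielding a total recursive function $n\mapsto A_n$ with $A_n\models T_n$ a finite structure whose relation- and function-tables are uniformly recursive in $n$. The second ingredient is the classical fact (Tarski--Mostowski--Robinson, \cite{undecidable}) that \emph{all recursive functions and relations are representable in $\mathbf{R}$}; this is precisely the feature of $\mathbf{R}$ underlying Theorem \ref{interpretable theorem}(2). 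In particular the recursive predicates ``$R^{A_n}(i_1,\dots,i_r)$ holds'' and ``$F^{A_n}(i_1,\dots)=j$'', and the size function $n\mapsto\abs{A_n}$, are represented by $L(\mathbf{R})$-formulas whose values on numerals are provably correct in $\mathbf{R}$.

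With these in hand I would construct an interpretation $I$ of $T$ in $\mathbf{R}$ whose internal structure, in any model of $\mathbf{R}$, is a limit of the $A_n$. The elements of the internal domain are coded by numerals and read as \emph{threads} that, at each sufficiently large index $n$, select an element of $A_n$ in a coherent way; an internal relation $R_I$ or function $F_I$ is then evaluated by consulting, via the representing formulas of Step two, the table of $A_n$ at a large enough index. For the verification, fix an axiom $\tau_m$: since $A_n\models\tau_m$ for every $n\geq m$, I would show $\mathbf{R}\vdash\tau_m^I$ by discharging each quantifier of $\tau_m^I$ through the behaviour of the relevant thread in some $A_n$ with $n\geq m$, using only the provable numeral-instances supplied by representability together with the control over numerals given by $\sf Ax1$--$\sf Ax5$. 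One checks separately the totality of the interpreted operations and the relativized equality axioms.

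The main obstacle is exactly this assembly-and-verification step. Everything $\mathbf{R}$ can prove is, in effect, a correct statement about specific numerals; from this thin base I must (i) define a domain and operations that \emph{cohere} across the $A_n$, which need not form an elementary chain, forcing the thread construction and an internal identification of representatives, and (ii) push the proof of each $\tau_m^I$ through $\mathbf{R}$ itself rather than merely argue its truth in the standard model $\mathbb{N}\models\mathbf{R}$ --- note that in $\mathbb{N}$ the internal model must come out as a genuine, possibly infinite, model of \emph{all} of $T$, assembled purely from finite fragments. Obtaining the one-dimensional, parameter-free interpretation promised in the footnote adds the further delicacy of coding each thread by a single element, which is where the most careful bookkeeping will be required.
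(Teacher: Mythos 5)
First, a point of comparison: the paper does not prove this statement at all --- it is imported as a black box from Visser's \emph{Why the theory $\mathbf{R}$ is special?} (Theorem 6 of \cite{Visser 14}), so there is no in-paper proof to match your argument against. Judged on its own, your forward direction is correct and complete in outline: finitely many $\mathbf{R}$-proofs of the $\sigma_j^I$, totality and equality conditions use only a finite fragment $\mathbf{R}'\subseteq\mathbf{R}$, the local finite satisfiability of $\mathbf{R}$ gives a finite model of $\mathbf{R}'$, and the interpretation carves a finite internal model of $T_0$ out of it. That is the standard argument (and is exactly how the paper itself argues that $\mathbf{Q}\not\unlhd\mathbf{R}$).

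The converse, however, is where the entire content of the theorem lives, and there your text is a plan rather than a proof. Your Steps 1 and 2 (a recursive sequence $A_n\models T_n$ of finite models, and representability of the resulting recursive tables in $\mathbf{R}$) are correct and are indeed necessary ingredients, but they are the cheap part. The two steps you defer are precisely the ones that would have to be carried out: (i) the $A_n$ come with no embeddings between them, so ``threads that select an element of $A_n$ in a coherent way'' is not yet a definition --- you have not said what a thread is, why enough threads exist (in $\mathbb{N}$ the thread structure must assemble into a genuine, generally infinite, model of all of $T$, which is a K\"onig-lemma-type argument that must moreover be made \emph{definable} by a single $L(\mathbf{R})$-formula), nor how two threads are internally identified; and (ii) the verification $\mathbf{R}\vdash\tau_m^I$, together with totality of the $F_I$ on $\delta_I$, requires $\mathbf{R}$ to prove genuinely quantified statements, whereas $\mathbf{R}$ proves essentially nothing beyond correct numeral instances plus the bounded schemes $\sf{Ax4}$--$\sf{Ax5}$; designing $\delta_I$, $R_I$, $F_I$ so that these $\forall$- and $\forall\exists$-translations become $\mathbf{R}$-provable (and, for the footnote, one-dimensional and parameter-free) is the heart of Visser's argument. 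Your final paragraph concedes exactly this, so the proposal should be read as a correct reduction of the problem to its hard core, not as a proof of the theorem.
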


We say a specific class $\Phi$ of sentences  has the finite
model property if every satisfiable sentence of $\Phi$ has a finite
model. Since relational $\Sigma_2$ sentences in a finite relational language have the finite model property (see Chapter 5 in \cite{Finite Model Theory 99}), by Theorem \ref{visser thm on R},  any consistent theory axiomatized by a recursive set of $\Sigma_2$ sentences in a finite relational language is interpretable in $\mathbf{R}$. Since all recursive functions are representable in $\mathbf{R}$ (see \cite[Theorem 6]{undecidable}, p.56), from Theorem \ref{interpretable theorem}(2),  $\mathbf{R}$ is  essentially undecidable. Cobham showed that $\mathbf{R}$ has a stronger property than essentially undecidability.
\begin{theorem}[Cobham, \cite{Vaught 62}]\label{Cobham theorem}
Any r.e.~ theory that weakly interprets $\mathbf{R}$ is undecidable.\footnote{Vaught \cite{Vaught 62} gave a proof of Cobham's theorem  via existential interpretation.}
\end{theorem}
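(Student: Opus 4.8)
The plan is to argue by contraposition: assuming the r.e.\ theory $U$ is decidable and weakly interprets $\mathbf{R}$, I will derive a contradiction with the existence of a recursively inseparable pair. Fix an interpretation $\tau$ witnessing the weak interpretation, so that $W := U + \mathbf{R}^{\tau}$ is consistent. The arithmetical engine is the same one underlying the essential undecidability of $\mathbf{R}$ (Theorem \ref{interpretable theorem}(2) together with the representability of all recursive functions in $\mathbf{R}$): I would fix a pair $\langle A,B\rangle$ of disjoint r.e.\ sets that is recursively inseparable (say $A=\{e:\{e\}(e)\simeq 0\}$, $B=\{e:\{e\}(e)\simeq 1\}$) and an existential ($\Sigma_1$) formula $\varphi(x)$ expressing ``the least halting computation of $\{x\}(x)$ outputs $0$''. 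Using $\Sigma_1$-completeness of $\mathbf{R}$ together with the bounded axioms $\mathsf{Ax3}$--$\mathsf{Ax5}$ to confine the witness by its numeral, one obtains $\mathbf{R}\vdash\varphi(\overline{e})$ for $e\in A$ and $\mathbf{R}\vdash\neg\varphi(\overline{e})$ for $e\in B$, each proof using only a finite fragment of $\mathbf{R}$.

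The template I have in mind is the proof of Church's theorem for $\mathbf{Q}$. If $\mathbf{R}$ were finitely axiomatized, then $\mathbf{R}^{\tau}$ would be a single sentence $\sigma$, the theory $W=U+\sigma$ would be decidable whenever $U$ is (adjoining one sentence preserves decidability), and the set $\{e:W\vdash\varphi^{\tau}(\overline{e})\}$ would be recursive. By the transfer above, $A$ would be contained in this set and $B$ disjoint from it (using consistency of $W$ to exclude the wrong instance), so it would recursively separate $\langle A,B\rangle$ --- a contradiction. This is precisely how finite axiomatizability of $\mathbf{Q}$ yields undecidability of every theory weakly interpreting it.

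The main obstacle is that $\mathbf{R}$ is not finitely axiomatizable, so $\mathbf{R}^{\tau}$ is an infinite (merely recursive) axiom set and $W$ is only r.e., not decidable; the provable and refutable sets of $W$ are therefore only r.e. Worse, the finite fragment of $\mathbf{R}$ needed to prove $\varphi(\overline{e})$ for $e\in A$ is governed by the halting-computation witness, which is not recursively bounded in $e$; hence no single finite fragment works for all of $A$, and the naive search over growing fragments yields only a \emph{partial} recursive separator (total on $A\cup B$ by consistency of $W$, but undefined off $A\cup B$). Since $A\cup B$ is never recursive for an inseparable pair, this partial separator cannot be completed to a total recursive one by elementary means. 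Closing exactly this gap --- passing from the finitely axiomatized case to the non-finitely-axiomatized $\mathbf{R}$ --- is the entire content of the theorem.

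To close it I would follow Vaught's device of an \emph{existential interpretation} \cite{Vaught 62}. The point is that $\varphi$ is existential and that the relevant fragments of $\mathbf{R}$ can be weakly interpreted uniformly and existentially, so that the existential witness is supplied \emph{inside} the translated formula rather than being computed in advance. This reflects each membership fact $e\in A$ into an existential consequence that decidability of $U$ can test without a priori knowledge of the witness, while consistency of $W$ handles the $B$-side; assembling these yields a \emph{total} recursive separation of $\langle A,B\rangle$, contradicting recursive inseparability. Equivalently, one shows that weak interpretability of $\mathbf{R}$ forces the set of theorems of $U$ to be productive, hence non-recursive. I expect the construction of the existential interpretation --- guaranteeing that the unbounded witnesses are handled uniformly --- to be the delicate step, as it is exactly the feature distinguishing $\mathbf{R}$ from its finitely axiomatized relatives.
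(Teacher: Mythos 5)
First, a remark on the comparison itself: the paper does not prove this statement --- Theorem \ref{Cobham theorem} is quoted from Cobham via Vaught \cite{Vaught 62} and used as a black box --- so there is no in-paper proof to measure you against; your attempt has to stand on its own.

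On its own terms, your proposal is an accurate and well-organized \emph{reduction of the theorem to its actual content, which you then do not prove}. Everything up to the last paragraph is correct: the contrapositive setup, the representation of a recursively inseparable pair $\langle A,B\rangle$ in $\mathbf{R}$ by a Rosser-style $\Sigma_1$ formula, the observation that the finitely axiomatized template ($\mathbf{Q}$-style) gives a recursive separator $\{e : W\vdash\varphi^{\tau}(\overline{e})\}$, and the diagnosis that for $\mathbf{R}$ this set is only r.e.\ (indeed $A$ itself is an r.e.\ separator, so an r.e.\ separator contradicts nothing), while the naive proof search gives only a partial recursive function total on $A\cup B$. That diagnosis is exactly right. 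But the final paragraph, where the theorem actually lives, is a promissory note: ``follow Vaught's device of an existential interpretation'' so that ``the existential witness is supplied inside the translated formula.'' What has to be produced is, for each $e$, a \emph{single} sentence $\zeta_e$, recursive in $e$, such that (i) $e\in A$ implies $U\vdash\zeta_e$, (ii) $e\in B$ implies $U\nvdash\zeta_e$; then decidability of $U$ makes $\{e: U\vdash\zeta_e\}$ a total recursive separator. Achieving (i) requires packing both the halting witness $m$ \emph{and} the finite fragment of $\mathbf{R}^{\tau}$ that certifies $\varphi^{\tau}(\overline{e})$ into a single existential prefix inside $\zeta_e$, and achieving (ii) requires a consistency argument (using that $U$ together with \emph{any} finite fragment of $\mathbf{R}^{\tau}$ is consistent) showing the $A$-side and $B$-side sentences cannot both be proved. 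You name this step, correctly identify it as delicate, and explicitly defer it (``I expect the construction \dots\ to be the delicate step''). Since that construction is the entire distance between the trivial finitely-axiomatized case and Cobham's theorem, the proposal as written has a genuine gap rather than a complete proof.
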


Now, we discuss some variants of $\mathbf{R}$. If not
explicitly mentioned otherwise, we assume that the base language is the same as $L(\mathbf{R})=\{\mathbf{0}, \mathbf{S}, +, \cdot, \leq\}$ with $\leq$ as a primitive binary relation symbol.
Let $\mathbf{R}_0$ be the theory consisting of schemes $\sf{Ax1}, \sf{Ax2}, \sf{Ax3}$ and $\sf{Ax4}$.
The theory $\mathbf{R}_0$ is no longer essentially undecidable: the theory $\mathbf{R}_0$ has a decidable complete extension given by the theory of real closed fields with $\leq$ as the empty relation on reals. In fact, whether $\mathbf{R}_0$ is  essentially undecidable depends on the language of $\mathbf{R}_0$. If $L(\mathbf{R}_0)=\{\mathbf{0}, \mathbf{S}, + , \cdot\}$  with $\leq$  defined in terms
of $+$, then $\mathbf{R}_0$ is essentially undecidable: Cobham first observed that  $\mathbf{R}$  is interpretable in $\mathbf{R}_0$ and hence $\mathbf{R}_0$ is essentially undecidable (see \cite{Vaught 62} and \cite{Jones 83}).
Let $\mathbf{R}_1$ be the theory consisting of schemes $\sf{Ax1}, \sf{Ax2}, \sf{Ax3}$ and $\sf{Ax4}^{\prime}$ where $\sf{Ax4}^{\prime}$ is defined as follows:
\[\forall x(x\leq \overline{n}\leftrightarrow x=\overline{0}\vee \cdots \vee x=\overline{n}).\]
The theory $\mathbf{R}_1$ is  essentially undecidable since $\mathbf{R}$ is interpretable in $\mathbf{R}_1$ (see \cite{Jones 83}, p. 62).
However $\mathbf{R}_1$ is not minimal essentially undecidable. Let $\mathbf{R}_2$ be the system consisting of schemes $\sf{Ax2}, \sf{Ax3}$ and $\sf{Ax4}^{\prime}$.
From \cite{Jones 83}, $\mathbf{R}$ is interpretable in $\mathbf{R}_2$ and hence $\mathbf{R}_2$ is essentially undecidable.\footnote{Another way to show that $\mathbf{R}_2$ is essentially
undecidable is to prove that all recursive functions are representable in $\mathbf{R}_2$.} The theory $\mathbf{R}_2$  is minimal essentially undecidable in the sense that if we delete any axiom scheme of $\mathbf{R}_2$, then the remaining theory  is not essentially undecidable: if we delete $\sf{Ax2}$, then the theory of natural numbers with $x\cdot y$ defined as $x+y$ is a complete  decidable extension; if we delete $\sf{Ax3}$, then the theory of models with only one element is a complete  decidable extension; if we delete $\sf{Ax4}^{\prime}$, then the theory of real closed fields is a complete  decidable extension.
By essentially the same argument as the proof of Theorem \ref{Cobham theorem} in \cite{paper on Q}, we can show that any r.e.~ theory that weakly interprets $\mathbf{R}_2$ is undecidable.

Kojiro Higuchi and Yoshihiro Horihata introduced the theory of concatenation $\mathbf{WTC}^{-\epsilon}$, which is a weak subtheory
of Grzegorczyk's theory $\mathbf{TC}$, and showed that $\mathbf{WTC}^{-\epsilon}$ is  minimal essentially undecidable and $\mathbf{WTC}^{-\epsilon}$ is mutually interpretable with $\mathbf{R}$ (see \cite{Higuchi}).

Elementary Arithmetic ($\mathbf{EA}$) is  $I\Delta_0 +\mathbf{exp}$.
We refer to \cite{Metamathematics of First-Order
Arithmetic} for the definition of Primitive Recursive Arithmetic ($\mathbf{PRA}$). In a summary,  we have the following pictures.
\begin{enumerate}[(1)]
  \item Theories $\mathbf{PA}^{-}, \mathbf{Q}^{+}, \mathbf{Q}, \mathbf{Q}^{-},  \mathbf{TC}, \mathbf{AS}$ and $\mathbf{S^1_2}$  are  mutually interpretable   and are all essentially
undecidable.
  \item Theories $\mathbf{R}, \mathbf{R}_1, \mathbf{R}_2$ and $\mathbf{WTC}^{-\epsilon}$ are mutually interpretable and are all essentially undecidable.
\item  $\mathbf{R}\lhd\mathbf{Q}\lhd \mathbf{EA}\lhd \mathbf{PRA}\lhd \mathbf{PA}$.
\end{enumerate}

Now, a natural question is: among finitely axiomatized theories for which $\sf G1$ holds, does $\mathbf{Q}$ have the least degree of interpretation? The following theorem tells us that the answer is no.

\begin{theorem}[Visser, Theorem 2, \cite{paper on Q}]\label{Visser on Q}
Suppose $\mathbf{R} \subseteq A$, where $A$ is finitely axiomatized
and consistent. Then, there is a finitely axiomatized $B$ such
that $\mathbf{R} \subseteq B \subseteq A$ and $B\lhd A$.
\end{theorem}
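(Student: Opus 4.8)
The plan is to produce the finitely axiomatized theory $B$ as an intermediate between $\mathbf{R}$ and $A$ by \emph{thinning out} the axioms of $A$ along a carefully chosen bounded cut, so that $B$ remains strong enough to interpret $\mathbf{R}$ but becomes strictly weaker than $A$ with respect to interpretation. The starting observation is that since $\mathbf{R} \subseteq A$ and $A$ is finitely axiomatized and consistent, $A$ interprets $\mathbf{R}$ and is itself essentially undecidable (via Theorem \ref{interpretable theorem}, since $\mathbf{R}$ interprets all recursive functions). The key structural fact I would exploit is Theorem \ref{visser thm on R}: an r.e.\ theory is interpretable in $\mathbf{R}$ iff it is locally finitely satisfiable. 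So to build $B$ with $B \lhd A$ I want $B$ to be \emph{locally finitely satisfiable} (so that $B \unlhd \mathbf{R}$, whence $B \lhd A$ would follow once $B$ interprets $\mathbf{R}$ and fails to interpret $A$) --- but this cannot literally hold for $B$ since $B \supseteq \mathbf{R} \not\unlhd$ anything with finite models containing $\mathbf{R}$ outright. Hence the real strategy must instead aim directly at separating the interpretability degrees of $A$ and $B$.

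First I would fix a definable cut in $A$. Because $\mathbf{R} \subseteq A$, inside any model of $A$ one has a standard-like initial segment; using Solovay's shortening-of-cuts technique one can arrange a definable cut $J$ that is closed under the relevant operations and on which $A$ proves the translations of the $\mathbf{R}$-axioms. Then I would define $B$ to consist of $\mathbf{R}$ together with a finite fragment $A_0$ of $A$ chosen so that $B$ still interprets $\mathbf{R}$ (trivially, since $\mathbf{R} \subseteq B$) but is provably weaker: concretely, one takes $B$ to be $A$ with its axioms relativized to the cut $J$, i.e.\ $B = \{\, \varphi^J : \varphi \in A_0 \,\} \cup \mathbf{R}$ for an appropriate finite $A_0$. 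Since relativizing to a proper cut can only weaken a theory, we get $B \unlhd A$; the nontriviality is to ensure the reverse interpretation fails.

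The main steps in order are: (i) verify $B$ is finitely axiomatized and consistent and that $\mathbf{R} \subseteq B \subseteq A$ (the last containment up to logical consequence, obtained by showing $A \vdash \psi$ for each axiom $\psi$ of $B$); (ii) show $B \unlhd A$, which is immediate from $B \subseteq A$ once the containment is arranged; and (iii) show $A \not\unlhd B$, i.e.\ that $B$ does \emph{not} interpret $A$. Step (iii) is where I expect the essential difficulty and would require a genuine non-interpretability argument --- one shows that $B$, being $A$ cut down to a definable initial segment, admits models (or model-theoretic configurations) that $A$ does not admit, and that any interpretation of $A$ in $B$ would let one recover in $B$ some totality or collection principle that $B$ provably lacks. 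The cleanest route is to produce, for the particular cut chosen, a model of $B$ in which some $\Sigma$-sentence provable in $A$ fails under every candidate interpretation, using a model-theoretic compactness or indicator argument to defeat all possible multi-dimensional parameter-allowing interpretations simultaneously.

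The hard part will be step (iii): ruling out \emph{all} interpretations of $A$ in $B$, including multi-dimensional ones with parameters, rather than merely the obvious identity-type interpretations. I would handle this by leveraging the finite axiomatizability of $A$ (so that an interpretation of $A$ in $B$ is witnessed by a single sentence) together with a quantitative/feasibility obstruction: the cut $J$ is chosen short enough that $B$ cannot verify the growth or consistency statement that any interpretation of $A$ would force, contradicting the second incompleteness phenomenon or a direct combinatorial bound on the number of provably-distinct elements below a numeral. This mirrors the standard technique by which $\mathbf{R} \lhd \mathbf{Q}$ is established (local finite satisfiability of the weaker theory versus the necessarily infinite models of the stronger), adapted here to separate the finitely axiomatized $B$ from $A$.
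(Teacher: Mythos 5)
The paper does not actually prove this statement: it is imported verbatim as Theorem~2 of Visser's \emph{On $\mathbf{Q}$}, so there is no in-paper argument to compare yours against. Judged on its own terms, your proposal has a gap that is fatal to the construction itself, not merely to its write-up. You define $B = \{\varphi^J : \varphi \in A_0\} \cup \mathbf{R}$. First, this $B$ is not finitely axiomatized, since $\mathbf{R}$ is an infinite scheme and is not finitely axiomatizable; arranging a \emph{finitely} axiomatized $B$ that still proves every instance of $\sf{Ax1}$--$\sf{Ax5}$ is one of the genuine constraints of the theorem and you never address it. Second, and worse, relativizing the axioms of $A$ to a definable cut $J$ produces a theory that interprets $A$ \emph{back}: the translation with domain $\delta := J$ and identity on the non-logical symbols is precisely an interpretation of $A$ in $B$, because $B$ proves $\varphi^J$ for each axiom $\varphi$ of $A_0$ (this is exactly why Solovay's shortening-of-cuts yields theories \emph{mutually} interpretable with the base, e.g.\ $I\Sigma_0+\Omega_1$ and $\mathbf{Q}$). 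So your $B$ satisfies $A \unlhd B$ by design, which is the opposite of what step (iii) requires. There is also a prior obstacle: an arbitrary finitely axiomatized consistent $A \supseteq \mathbf{R}$ need not prove transitivity of $\leq$, associativity of $+$, or any of the bookkeeping needed to define and shorten cuts, so the Solovay machinery you invoke is not available in this generality.

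Step (iii), which carries the entire content of the theorem, is left to a menu of mutually inconsistent hopes (``indicator argument'', ``second incompleteness phenomenon'', ``combinatorial bound on provably-distinct elements'') with no commitment to any mechanism that would rule out all multi-dimensional parametric piecewise interpretations. Note also that the one concrete separation template available in this paper --- $\mathbf{R} \lhd \mathbf{Q}$ via local finite satisfiability and Theorem~\ref{visser thm on R} --- is structurally unavailable here, as you half-acknowledge: a finitely axiomatized $B \supseteq \mathbf{R}$ has only infinite models, so it is not locally finitely satisfiable and is not interpretable in $\mathbf{R}$. A workable proof has to exploit the finite axiomatizability of $A$ in an essentially different way (Visser's argument runs through restricted provability/consistency statements and a diagonal construction, not through cuts), and nothing in your outline supplies a substitute for that ingredient.
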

Theorem \ref{Visser on Q}  shows that the structure $\langle \{S: \mathbf{R}\unlhd S\lhd \mathbf{Q}\}, \lhd\rangle$ is not well founded w.r.t.~ finitely axiomatized theories.

\begin{theorem}[Visser, Theorem 12, \cite{paper on Q}]\label{incomparable thm}
Suppose $A$ and $B$ are finitely axiomatized theories that interpret   $\mathbf{S^1_2}$. Then there are finitely axiomatized theories $\overline{A}\supseteq A$ and $\overline{B}\supseteq B$ such that $\overline{A}$ and $\overline{B}$ are incomparable (i.e.~ $\overline{A}\ntrianglelefteq \overline{B}$ and $\overline{B}\ntrianglelefteq \overline{A}$).
\end{theorem}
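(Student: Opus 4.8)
The two easy observations are that $\overline{A}$ and $\overline{B}$ will be obtained by adjoining a single sentence to $A$ and to $B$, so they are automatically finitely axiomatized extensions, and that we must keep them \emph{consistent}: an inconsistent theory both interprets and is interpreted by everything, so incomparability would degenerate. The plan is therefore to adjoin carefully chosen $\Pi_1$ sentences. The crucial reduction is from interpretability to provable consistency. Since $A$ and $B$ interpret $\mathbf{S^1_2}$ they are sequential, so each can arithmetize proof predicates and definable cuts. I would invoke the Orey--H\'{a}jek--Pudl\'{a}k characterization in its form for sequential (not necessarily reflexive) theories: for a finitely axiomatized sequential theory $X$ and a sequential theory $Y$, one has $X\unlhd Y$ if and only if $Y$ proves a cut-restricted consistency statement $\mathrm{Con}^{c}(X)$ for some $Y$-definable cut $c$. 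Thus to force $\overline{A}\ntrianglelefteq\overline{B}$ it suffices to guarantee that $\overline{B}$ proves $\mathrm{Con}^{c}(\overline{A})$ for \emph{no} definable cut $c$, and symmetrically for $\overline{B}\ntrianglelefteq\overline{A}$.

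The heart of the argument is a simultaneous self-reference. Using a simultaneous fixed point (double diagonalization) I would construct two sentences $\rho_A$ and $\rho_B$, each a Rosser-style consistency assertion, where $\rho_A$ refers to the provability predicate of $\overline{B}=B+\rho_B$ and $\rho_B$ refers to that of $\overline{A}=A+\rho_A$; because adjoining a sentence changes a theory only by a single conjunct, the G\"{o}del numbers of $\overline{A}$ and $\overline{B}$ can be fed into the fixed-point equations in the usual way. Informally $\rho_A$ says ``in the race between proofs, $\overline{B}$ never certifies the consistency of $\overline{A}$ on a cut before $\overline{A}$ is seen to be refuted,'' and dually for $\rho_B$. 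The Rosser (witness-comparison) formulation, rather than a plain G\"{o}del one, is what will let me simultaneously secure consistency of the two extensions and the failure of the relevant cut-consistency statements.

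With these in hand the verification splits into two symmetric halves. For incomparability: assuming $\overline{A}\unlhd\overline{B}$, the Orey--H\'{a}jek--Pudl\'{a}k characterization yields a $\overline{B}$-definable cut $c$ with $\overline{B}\vdash\mathrm{Con}^{c}(\overline{A})$; but the design of $\rho_B$ (which is an axiom of $\overline{B}$) is arranged precisely so that this is impossible, a contradiction, and likewise for $\overline{B}\unlhd\overline{A}$. For consistency: a Rosser/L\"{o}b-style analysis of the witness-comparison sentences shows that $\overline{A}$ and $\overline{B}$ cannot be refuted, using that $A$ and $B$ are consistent and interpret $\mathbf{S^1_2}$, so that the cut form of the second incompleteness theorem---a consistent sequential finitely axiomatized $X$ does not prove $\mathrm{Con}^{c}(X)$ on any cut $c$---is available. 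Together these give consistent finitely axiomatized $\overline{A}\supseteq A$ and $\overline{B}\supseteq B$ that are $\unlhd$-incomparable.

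The main obstacle, and the place where care is needed, is the interaction between cuts and the self-reference. Because $\overline{A}$ and $\overline{B}$ are finitely axiomatized they are not reflexive, so I cannot use the clean reflexive form of Orey--H\'{a}jek; I must work with the quantifier ``for some definable cut $c$,'' which means the fixed-point sentences have to diagonalize against \emph{all} cuts at once. The standard remedy is to phrase consistency in a shortening-of-cuts--robust way (for instance via depth-bounded or tableau consistency, which is provably closed under the cut operations), so that defeating a single canonical consistency statement defeats the whole family; making the double Rosser construction cohere with this uniform consistency notion---and simultaneously guaranteeing consistency of both extensions---is the technical crux. Everything else (finite axiomatizability, the reduction to consistency, and the symmetry of the two halves) is then routine.
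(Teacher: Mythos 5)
First, a point of reference: the paper does not prove this statement at all --- it is quoted verbatim from Visser's \emph{On $\mathbf{Q}$} (Theorem 12 there) as background, so there is no in-paper proof to compare your attempt against. Judged on its own, your proposal is a reasonable blueprint for how results of this kind are proved (a simultaneous Rosser-style fixed point played against a characterization of interpretability by cut-restricted consistency, plus the cut form of the second incompleteness theorem for consistency of the extensions), but as it stands it has two genuine gaps.

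The first is the step ``since $A$ and $B$ interpret $\mathbf{S^1_2}$ they are sequential.'' This is a non sequitur, and in fact false in general: $\mathbf{Q}$ interprets $\mathbf{S^1_2}$ (on a definable cut) but is not sequential. The Orey--H\'{a}jek--Pudl\'{a}k-style equivalence you invoke --- $X\unlhd Y$ iff $Y$ proves a cut-restricted (cut-free/tableau) consistency statement for $X$ --- is established for \emph{sequential} $Y$; the ``only if'' direction needs $Y$ to carry partial satisfaction predicates over its whole domain in order to reflect a translated refutation, and mere interpretability of $\mathbf{S^1_2}$ does not supply this. So your reduction of $\unlhd$ to cut-consistency is not available under the theorem's actual hypotheses without additional work (e.g., passing to sequential conservative extensions, or replacing the reduction by a characterization that only uses the given interpretations of $\mathbf{S^1_2}$, such as one via restricted provability $\Box_{X,n}$). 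The second gap is that the heart of the argument is left as a slogan: the sentences $\rho_A$, $\rho_B$ are described only informally (``in the race between proofs\ldots''), and both halves of the verification --- that the witness-comparison design really refutes every statement of the form $\mathrm{Con}^{c}(\overline{A})$ in $\overline{B}$ uniformly in the cut $c$, and that the same design nevertheless leaves both $\overline{A}$ and $\overline{B}$ consistent --- are asserted ``by design'' rather than carried out. Note also that a naive witness comparison between the two $\Sigma_1$ statements ``$\overline{A}\unlhd\overline{B}$'' and ``$\overline{B}\unlhd\overline{A}$'' cannot work as in the classical Rosser argument, because unlike ``$T\vdash\rho$'' and ``$T\vdash\neg\rho$'' these two events are jointly satisfiable; the contradiction has to be manufactured by transferring the translated fixed-point sentences across the hypothetical interpretation, and that is exactly the part you identify as the ``technical crux'' and do not supply. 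Until the fixed points are written down and both halves checked, this is a plan rather than a proof.
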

Theorem \ref{incomparable thm} shows that there are many pairs of incomparable theories  extending $\mathbf{Q}$.

\section{Finding the limit of applicability of $\sf G1$ w.r.t.~ interpretation}\label{section 3}

In this section, we examine the  limit of applicability of $\sf G1$ w.r.t.~ interpretation and show that we can find many  theories weaker than $\mathbf{R}$ w.r.t.~ interpretation for which $\sf G1$ holds  based on Je\v{r}\'{a}bek's work using some model theory.  First of all, we give some equivalent characterizations of the notion ``$\sf G1$ holds for the theory $T$".
\begin{proposition}\label{eq pro}
Let $T$ be a recursively axiomatizable consistent theory. The following are equivalent:
\begin{enumerate}[(1)]
  \item  $\sf G1$ holds for  $T$.
  \item $T$ is essentially incomplete.
  \item $T$ is essentially undecidable.
\end{enumerate}
\end{proposition}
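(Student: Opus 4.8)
The plan is to prove the cyclic chain of implications $(1)\Rightarrow(2)\Rightarrow(3)\Rightarrow(1)$. The whole argument rests on two standard recursion-theoretic facts about recursively axiomatizable theories, which I would isolate at the outset: \emph{(Fact A)} a complete, recursively axiomatizable, consistent theory is decidable — enumerate its theorems and, given a sentence $\phi$, wait until either $\phi$ or $\neg\phi$ appears, which must happen by completeness; and \emph{(Fact B)}, the effective Lindenbaum lemma, that every consistent decidable theory has a complete consistent decidable extension in the same language, obtained by deciding successively, for an enumeration $\phi_0,\phi_1,\dots$ of all $L(T)$-sentences, whether to adjoin $\phi_i$ or $\neg\phi_i$ while preserving consistency (decidability of the current theory makes each such consistency check effective).

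The implication $(1)\Rightarrow(2)$ is the cheapest direction and I would dispatch it first. If $S$ is any recursively axiomatizable consistent extension of $T$ in the language $L(T)$, then $T\unlhd S$ via the identity (one-dimensional, parameter-free) translation, since $S$ proves every theorem of $T$. Hence $(1)$ forces $S$ to be incomplete, which is exactly essential incompleteness of $T$.

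For $(2)\Rightarrow(3)$ I would argue by contraposition using Fact B. If $T$ fails to be essentially undecidable, choose a recursively axiomatizable consistent extension $S\supseteq T$ in $L(T)$ that is decidable. Applying Fact B to $S$ yields a complete, consistent, decidable extension $S'$ of $S$ in the same language; being decidable, $S'$ is in particular recursively axiomatizable, so $S'$ is a complete recursively axiomatizable consistent extension of $T$ in $L(T)$, contradicting $(2)$. For $(3)\Rightarrow(1)$ I would combine Theorem \ref{interpretable theorem}(1) with Fact A: given any recursively axiomatizable consistent $S$ with $T\unlhd S$, essential undecidability of $T$ together with $T\unlhd S$ (and consistency of $S$) yields, by Theorem \ref{interpretable theorem}(1), that $S$ is essentially undecidable, hence undecidable; were $S$ complete it would by Fact A be decidable, a contradiction, so $S$ is incomplete, which is precisely $\sf G1$ for $T$.

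The genuinely load-bearing parts are Facts A and B, which translate between the decidability world (clauses $(3)$) and the completeness world (clauses $(1)$ and $(2)$); everything else is bookkeeping. I expect the main obstacle, or at least the point deserving care, to be the effective Lindenbaum step in $(2)\Rightarrow(3)$: one must ensure the completion stays in the same language $L(T)$ and that each consistency decision is algorithmic, which is exactly where decidability of the starting theory $S$ is used. The interpretation-theoretic direction, by contrast, is handled cleanly by the already-quoted Theorem \ref{interpretable theorem}(1), so no new interpretability machinery is needed.
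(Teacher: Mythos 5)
Your proposal is correct and follows essentially the same route as the paper: the trivial identity-interpretation step for $(1)\Rightarrow(2)$, the two standard facts (complete recursively axiomatizable consistent theories are decidable; consistent decidable theories have complete consistent decidable extensions in the same language) to pass between incompleteness and undecidability, and Theorem \ref{interpretable theorem}(1) for the interpretability direction. The only difference is organizational — you arrange the argument as the cycle $(1)\Rightarrow(2)\Rightarrow(3)\Rightarrow(1)$ and sketch proofs of the two cited facts, whereas the paper proves $(2)\Leftrightarrow(3)$ directly and derives $(2)\Rightarrow(1)$ from it.
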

\begin{proof}\label{}
$(1) \Rightarrow (2)$ is  trivial.

$(2) \Leftrightarrow (3)$: It is well known that every consistent recursively axiomatizable
complete theory is decidable; and every incomplete decidable theory has a consistent,
decidable complete extension in the same language (see Corollary 3.1.8 and Theorem 3.1.9 in \cite{metamathematics}, p.214-215). From these two facts, $T$ is essentially undecidable  iff $T$ is essentially incomplete.

 $(2) \Rightarrow (1)$: Follows from Theorem \ref{interpretable theorem} and $(2) \Leftrightarrow (3)$.
\end{proof}

As a corollary of Section \ref{section 2}, we have:
\begin{enumerate}[(1)]
  \item $\sf G1$  holds for the following theories and they are mutually interpretable:
$\mathbf{Q}, I\Sigma_0, I\Sigma_0+\Omega_{n}(n\geq 1), B\Sigma_1, B\Sigma_1+\Omega_{n}(n\geq 1), \mathbf{TC}, \mathbf{Q}^{-}, \mathbf{Q}^{+}, \mathbf{PA}^{-}, \mathbf{S^1_2}$.
  \item $\sf G1$  holds for the following theories and they are mutually interpretable: $\mathbf{R}, \mathbf{R}_1, \mathbf{R}_2$ and $\mathbf{WTC}^{-\epsilon}$.
\end{enumerate}

Up to now, we do not have any
example of an essentially undecidable theory $S$  such that $S \lhd \mathbf{R}$ and $\sf G1$  holds for $S$. 
We find that Je\v{r}\'{a}bek's work in \cite{Emil} essentially provides such an example of theory $S$.
The motivating question of \cite{Emil} is: if a theory represents all partial recursive functions, does it interpret Robinson's theory $\mathbf{R}$?  Je\v{r}\'{a}bek \cite{Emil} negatively answered this question and
showed    that there exists a theory $T$ in which all partial
recursive functions are representable, yet $T$ does not interpret $\mathbf{R}$.
 Je\v{r}\'{a}bek's proof uses tools from model theory: investigating model-theoretic
properties of the model completion of the empty theory in a language with function symbols (see \cite{Emil}).\footnote{By the empty theory, we mean the theory with no extra-logical axioms.}
Now we introduce Je\v{r}\'{a}bek's theory $\mathbf{Rep_{\sf PRF}}$.
Let $\sf PRF$ denote the set of all partial recursive functions.  The language $L(\mathbf{Rep_{\sf PRF}})$ consists of constant symbols $\bar{n}$ for each $n \in \mathbb{N}$ and function symbols $\overline{f}$ of appropriate arity for each partial recursive function $f$. 
The theory $\mathbf{Rep_{\sf PRF}}$ is axiomatized by:
\begin{enumerate}[(1)]
  \item $\overline{n} \neq \overline{m}$ for $n \neq m \in \mathbb{N}$;
  \item $\overline{f}(\overline{n_0}, \cdots, \overline{n_{k-1}}) = \overline{m}$
for each $k$-ary partial recursive function $f$ such that $f(n_0, \cdots, n_{k-1}) = m$ where $n_0, \cdots, n_{k-1}, m \in \mathbb{N}$.
\end{enumerate}

\begin{corollary}\label{answer of key qn}
$\sf G1$ holds for $\mathbf{Rep_{\sf PRF}}$ and $\mathbf{Rep_{\sf PRF}}\lhd \mathbf{R}$.
\end{corollary}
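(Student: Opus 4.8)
The plan is to establish the two assertions separately, each by reducing it to a result already recorded in the excerpt. The first claim, that $\sf G1$ holds for $\mathbf{Rep_{\sf PRF}}$, I would handle via Proposition \ref{eq pro}: it suffices to show that $\mathbf{Rep_{\sf PRF}}$ is essentially undecidable. For this I would invoke Theorem \ref{interpretable theorem}(2), which says that any consistent theory in which all recursive functions are representable is essentially undecidable. So the work splits into two easy checks. First, $\mathbf{Rep_{\sf PRF}}$ is consistent: its axioms are all atomic sentences (and negated equalities between distinct numerals) that hold in the standard model $\mathbb{N}$ once each $\overline{f}$ is interpreted as the graph of $f$ (extended arbitrarily where $f$ is undefined), so $\mathbb{N}$ gives a model. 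Second, every (total) recursive function is representable in $\mathbf{Rep_{\sf PRF}}$: given a $k$-ary recursive $f$, the function symbol $\overline{f}$ together with axiom group (2) yields $\mathbf{Rep_{\sf PRF}}\vdash \overline{f}(\overline{n_0},\dots,\overline{n_{k-1}})=\overline{m}$ whenever $f(n_0,\dots,n_{k-1})=m$; combined with the axioms $\overline{n}\neq\overline{m}$ for $n\neq m$ from group (1), this gives exactly the representability condition $\mathbf{Rep_{\sf PRF}}\vdash\forall y(\varphi(\overline{a_1},\dots,\overline{a_n},y)\leftrightarrow y=\overline{m})$ using $\varphi(\bar{x},y):\equiv(\overline{f}(\bar{x})=y)$. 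Since total recursive functions are a subclass of $\sf PRF$, they are all representable, and Theorem \ref{interpretable theorem}(2) applies.

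**Establishing the non-interpretability direction.**

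For the second claim I must show both $\mathbf{Rep_{\sf PRF}}\unlhd\mathbf{R}$ and $\mathbf{R}\ntrianglelefteq\mathbf{Rep_{\sf PRF}}$. The direction $\mathbf{Rep_{\sf PRF}}\unlhd\mathbf{R}$ I would obtain from Visser's universal property, Theorem \ref{visser thm on R}: it is enough to verify that $\mathbf{Rep_{\sf PRF}}$ is locally finitely satisfiable, i.e.\ every finite subtheory has a finite model. A finite subtheory mentions only finitely many numerals and finitely many function symbols, and imposes only finitely many equational/inequational constraints among finitely many named values; one can satisfy these in a finite set by taking the finitely many numerals occurring, together with the finitely many output values demanded by the relevant instances of group (2), and defining the finitely many function symbols on this finite domain to match the required values (filling in arbitrarily elsewhere). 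The hard direction, $\mathbf{R}\ntrianglelefteq\mathbf{Rep_{\sf PRF}}$, is the crux, and here I would lean entirely on Je\v{r}\'{a}bek's theorem from \cite{Emil}, which is precisely the statement that there is a theory representing all partial recursive functions yet not interpreting $\mathbf{R}$. The theory $\mathbf{Rep_{\sf PRF}}$ is exactly that theory, so $\mathbf{R}$ is not interpretable in it.

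**The main obstacle.**

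The genuine mathematical content, and the step I expect to be the main obstacle, is the non-interpretability $\mathbf{R}\ntrianglelefteq\mathbf{Rep_{\sf PRF}}$, whose proof in \cite{Emil} proceeds through the model-theoretic analysis of the model completion of the empty theory in a language with function symbols. Since the excerpt lets me cite Je\v{r}\'{a}bek's work, I would treat that as a black box rather than reproving it; the remaining verifications (consistency, representability of total recursive functions, and local finite satisfiability) are routine unwindings of the definitions of $\mathbf{Rep_{\sf PRF}}$. Finally, I would assemble the pieces: $\mathbf{Rep_{\sf PRF}}\unlhd\mathbf{R}$ together with $\mathbf{R}\ntrianglelefteq\mathbf{Rep_{\sf PRF}}$ gives $\mathbf{Rep_{\sf PRF}}\lhd\mathbf{R}$ by definition of $\lhd$, and essential undecidability gives, via Proposition \ref{eq pro}, that $\sf G1$ holds for $\mathbf{Rep_{\sf PRF}}$, completing the corollary.
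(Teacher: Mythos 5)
Your proposal is correct and follows essentially the same route as the paper's own proof: essential undecidability via representability of all recursive functions (Theorem \ref{interpretable theorem}(2)) combined with Proposition \ref{eq pro}, interpretability in $\mathbf{R}$ via local finite satisfiability and Theorem \ref{visser thm on R}, and non-interpretability of $\mathbf{R}$ cited as a black box from Je\v{r}\'{a}bek. You merely spell out the routine verifications (consistency, the representability condition, finite models of finite subtheories) that the paper leaves implicit.
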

\begin{proof}\label{}
The theory $\mathbf{Rep_{\sf PRF}}$ is essentially undecidable since all recursive functions are representable in it. Since $\mathbf{Rep_{\sf PRF}}$ is locally finitely satisfiable, by Theorem \ref{visser thm on R}, $\mathbf{Rep_{\sf PRF}}\unlhd \mathbf{R}$. Je\v{r}\'{a}bek \cite{Emil} showed that $\mathbf{R}$ is not interpretable in
$\mathbf{Rep_{\sf PRF}}$. Thus, $\sf G1$ holds for $\mathbf{Rep_{\sf PRF}}$ and $\mathbf{Rep_{\sf PRF}}\lhd \mathbf{R}$.
\end{proof}

Je\v{r}\'{a}bek's \cite{Emil} is not written in the spirit of answering Question \ref{main qn} and the potential of the method in \cite{Emil} is not yet fully explored. Now, we give more examples of a theory $S$ such that $\sf G1$ holds for $S$ and $S\lhd \mathbf{R}$ based on Je\v{r}\'{a}bek's work in \cite{Emil}.

\begin{definition}
We say $\langle S, T\rangle$ is a recursively inseparable pair if $S$ and $T$ are disjoint r.e.~ subsets of $\mathbb{N}$, and there is no recursive set $X\subseteq\mathbb{N}$ such that $S\subseteq X$ and $X\cap T=\emptyset$.
\end{definition}

\begin{definition}
Let $\langle A, B\rangle$ be a recursively inseparable pair. Consider the following r.e.~ theory $U_{\langle A,B\rangle}$ with $L(U_{\langle A,B\rangle})=\{\mathbf{0},  \mathbf{S}, \mathbf{P}\}$ where $\mathbf{P}$ is a unary relation symbol and $\overline{n}=\mathbf{S}^n\mathbf{0}$ for $n\in \mathbb{N}$:
\begin{enumerate}[(1)]
  \item $\overline{m}\neq\overline{n}$ if $m\neq n$;
  \item $\mathbf{P}(\overline{n})$ if $n\in A$;
  \item $\neg \mathbf{P}(\overline{n})$ if $n\in B$.
\end{enumerate}
\end{definition}

In the following, let $\langle A, B\rangle$ be an arbitrary recursively inseparable pair.
\begin{lemma}\label{G1 holds}
$\sf G1$  holds for $U_{\langle A,B\rangle}$.
\end{lemma}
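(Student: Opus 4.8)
The plan is to route everything through Proposition \ref{eq pro}, which tells us that for a recursively axiomatizable consistent theory, ``$\sf G1$ holds'' is equivalent to essential undecidability. So the lemma reduces to showing that $U_{\langle A,B\rangle}$ is consistent and essentially undecidable. Consistency is immediate from exhibiting a model: interpret the language over $\mathbb{N}$ with $\mathbf{0}$ and $\mathbf{S}$ standard and with $\mathbf{P}$ interpreted as the set $A$. Since $A$ and $B$ are disjoint, this structure satisfies all three axiom schemes (distinctness of numerals, $\mathbf{P}(\overline{n})$ for $n\in A$, and $\neg\mathbf{P}(\overline{n})$ for $n\in B$). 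Thus the hypotheses of Proposition \ref{eq pro} are met and it remains only to establish essential undecidability.

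For essential undecidability I would take an arbitrary recursively axiomatizable consistent extension $S$ of $U_{\langle A,B\rangle}$ in the same language $\{\mathbf{0},\mathbf{S},\mathbf{P}\}$ and argue that $S$ cannot be decidable. Suppose toward a contradiction that $S$ is decidable. The key idea is that a decidable $S$ lets us read off a recursive separator for the pair $\langle A,B\rangle$. Concretely, I would set $X=\{n\in\mathbb{N}: S\vdash \mathbf{P}(\overline{n})\}$. Because $S$ is decidable and the map $n\mapsto\ulcorner\mathbf{P}(\overline{n})\urcorner$ is recursive, the set $X$ is recursive.

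The final step is to check that $X$ separates $A$ from $B$. If $n\in A$, then $\mathbf{P}(\overline{n})$ is an axiom of $U_{\langle A,B\rangle}$ and hence a theorem of the extension $S$, so $n\in X$; thus $A\subseteq X$. If $n\in B$, then $\neg\mathbf{P}(\overline{n})$ is an axiom of $U_{\langle A,B\rangle}$, so $S\vdash\neg\mathbf{P}(\overline{n})$, and the consistency of $S$ rules out $S\vdash\mathbf{P}(\overline{n})$; hence $n\notin X$, giving $X\cap B=\emptyset$. So $X$ is a recursive set with $A\subseteq X$ and $X\cap B=\emptyset$, contradicting the recursive inseparability of $\langle A,B\rangle$. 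Therefore no consistent r.e.\ extension of $U_{\langle A,B\rangle}$ in the same language is decidable, which is precisely essential undecidability; by Proposition \ref{eq pro}, $\sf G1$ holds for $U_{\langle A,B\rangle}$.

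I do not expect a genuine technical obstacle here, as the computation is short once the correct reduction is identified. The two points that deserve care are conceptual rather than computational. First, essential undecidability quantifies over \emph{all} consistent r.e.\ extensions in the same language, so the separator must be extracted from an arbitrary such $S$ rather than from $U_{\langle A,B\rangle}$ itself; this is exactly where recursive inseparability does its work, converting hypothetical decidability of any extension into an impossible recursive separation. Second, it is worth observing that the distinctness axioms $\overline{m}\neq\overline{n}$ play no role in this argument — it is solely the interaction of the $\mathbf{P}$-axioms with recursive inseparability that forces essential undecidability.
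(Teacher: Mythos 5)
Your proof is correct and takes essentially the same route as the paper: both reduce the lemma to Proposition \ref{eq pro} and then apply the recursive inseparability of $\langle A,B\rangle$ to the set $X=\{n: S\vdash \mathbf{P}(\overline{n})\}$ for an arbitrary recursively axiomatizable consistent extension $S$. The only cosmetic difference is that you target essential undecidability (extracting a recursive separator from a hypothetical decision procedure for $S$), whereas the paper targets essential incompleteness directly (observing that the disjoint r.e.\ sets $X$ and $Y=\{n: S\vdash\neg\mathbf{P}(\overline{n})\}$ cannot cover $\mathbb{N}$, so some $\mathbf{P}(\overline{n})$ is independent of $S$); Proposition \ref{eq pro} makes these two formulations interchangeable.
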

\begin{proof}\label{}
By Proposition \ref{eq pro}, it suffices to show that $U_{\langle A,B\rangle}$ is essentially incomplete.
let $S$ be a recursively axiomatizable consistent extension of $U_{\langle A,B\rangle}$. Let $X = \{n : S\vdash \mathbf{P}(\overline{n})\}$ and $Y = \{n : S\vdash\neg \mathbf{P}(\overline{n})\}$.  Then  $A\subseteq X$ and $B \subseteq Y$.
Since $S$ is recursively axiomatizable and consistent,  $X$ and $Y$ are disjoint recursive enumerable sets. Since $\langle A,B\rangle$ is recursively inseparable, $X\cup Y\neq \mathbb{N}$. Take $n \notin X\cup Y$. Then $S \nvdash \mathbf{P}(\overline{n})$ and $S\nvdash\neg \mathbf{P}(\overline{n})$.  Hence $S$ is incomplete.
\end{proof}

\begin{fact}[Theorem 2, p.43, \cite{Per 97}]\label{disjoint pair}
Let $A$ and $B$ be disjoint r.e.~ subsets of $\mathbb{N}$ and $T$ be a consistent r.e.~ extension of $\mathbf{Q}$. Then there is  a $\Sigma_1$ formula $\phi(x)$  such that for any $n$, we have:
\begin{enumerate}
\item  $n\in A$ iff $T\vdash\phi(\overline{n})$;
\item  $n\in B$ iff $T\vdash\neg\phi(\overline{n})$.
\end{enumerate}
\end{fact}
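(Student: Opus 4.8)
The plan is to build a single $\Sigma_1$ formula $\phi$ by a fixed-point argument and then to verify, separately, the four implications hidden in the two ``iff'' statements: (1a) $n\in A\Rightarrow T\vdash\phi(\overline n)$; (1b) $T\vdash\phi(\overline n)\Rightarrow n\in A$; (2a) $n\in B\Rightarrow T\vdash\neg\phi(\overline n)$; (2b) $T\vdash\neg\phi(\overline n)\Rightarrow n\in B$. First I would fix primitive recursive ``stage'' predicates $\rho_A(x,s),\rho_B(x,s)$, cumulative in $s$, with $n\in A\iff\exists s\,\rho_A(n,s)$ and $n\in B\iff\exists s\,\rho_B(n,s)$; since $T$ is r.e.\ I also fix a primitive recursive proof predicate $\mathrm{Prf}_T(p,e)$. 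Throughout I would lean on two standard facts about $\mathbf{Q}$, hence about $T\supseteq\mathbf{Q}$: $\Sigma_1$-completeness (every true $\Sigma_1$ sentence is a theorem), and the provable bounded-order dichotomy, namely $\mathbf{Q}\vdash\forall x(x\le\overline k\vee\overline k\le x)$ and $\mathbf{Q}\vdash\forall x(x\le\overline k\to x=\overline 0\vee\cdots\vee x=\overline k)$ for each fixed $k$.

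For the formula itself, by a $\Sigma_1$-preserving version of the diagonal lemma I would produce a $\Sigma_1$ formula $\phi(x)$ for which $\mathbf{Q}$ proves
\[
\phi(x)\ \leftrightarrow\ \exists s\Big(\rho_A(x,s)\ \wedge\ \forall t\le s\,\neg\rho_B(x,t)\ \wedge\ \forall p\le s\,\neg\mathrm{Prf}_T(p,\ulcorner\neg\phi(\dot x)\urcorner)\Big),
\]
so that $\phi(\overline n)$ expresses that $n$ is enumerated into $A$ before it enters $B$ and before any $T$-refutation of $\phi(\overline n)$ appears. Directions (1a) and (2a) are the routine ones. For $n\in A$ I would plug in the true entry stage as witness and invoke $\Sigma_1$-completeness, the bounded clauses becoming provable because they are true. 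For $n\in B$ with entry stage $s_1$ I would split on the dichotomy at $\overline{s_1}$: for $s\ge\overline{s_1}$ the $\rho_B$-clause provably fails, and for $s<\overline{s_1}$ the finitely many instances $\rho_A(\overline n,\overline 0),\dots,\rho_A(\overline n,\overline{s_1})$ are provably false since $n\notin A$; together these upgrade the $\Pi_1$ sentence $\neg\phi(\overline n)$ to a theorem of $\mathbf{Q}$. This is precisely the step that uses the bounded-order dichotomy, and it explains why $\mathbf{Q}$ (indeed already $\mathbf{R}$) is strong enough.

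The hard part will be the two converse directions (1b) and (2b), where the hypothesis that $T$ is merely \emph{consistent} rather than \emph{sound} bites: an unsound $T$ can prove a false $\Sigma_1$ sentence, and it can also prove $\Pi_1$-truths such as $\neg\phi(\overline n)$ for $n\notin A\cup B$, so the membership sets and the provability sets need not coincide without further protection. The purpose of the $\mathrm{Prf}_T$-clause is to stage a Rosser-style race: if $T\vdash\neg\phi(\overline n)$ with least proof code $p_0$, then inspecting the fixed-point equivalence at stages below $p_0$ is meant to force $T\vdash\phi(\overline n)$ as well, contradicting consistency and thereby pinning the two provability sets to exactly $A$ and $B$. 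The main obstacle I anticipate is making this race genuinely \emph{symmetric}: the single guard written above shields $\phi$ against premature refutations but does nothing to shield $\neg\phi$ against premature proofs, and an adversarial $T$ can refute $\phi(\overline n)$ with a code smaller than the externally given $A$-entry stage. Overcoming this should require either a double, mutually referencing Rosser fixed point that guards proofs of $\phi$ and of $\neg\phi$ against each other, or a re-enumeration of $A$ and $B$ whose stages are interleaved with the proof search so that entry stages and proof codes are compared on an equal footing. Getting this bookkeeping right while keeping $\phi$ syntactically $\Sigma_1$ is the crux; once it is in place, $\phi$ is the desired formula.
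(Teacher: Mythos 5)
There is a genuine gap, and you have in fact located it yourself: the formula you actually write down is not the one that proves the Fact, and the step you defer as ``the crux'' is the entire content of the standard proof. (The paper itself gives no proof here --- it imports the result from Lindstr\"{o}m, Theorem 2, p.~43, whose argument is exactly the symmetric Rosser construction you gesture at but do not carry out.) Concretely, your asymmetric fixed point already fails at direction (1a), which you label ``routine'': if $n\in A$ with entry stage $s_0$ but $T$ happens to refute $\phi(\overline n)$ with a proof code $p_0\le s_0$, then no witness $s$ verifies the right-hand side (smaller $s$ fail the $\rho_A$-clause, larger $s$ still contain $p_0$ under the bounded guard), so the $\Sigma_1$ sentence is false and $\Sigma_1$-completeness gives nothing; nor does consistency rule this situation out, since $T$ need not be sound. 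So (1a), (1b) and (2b) are all open as written, not just the converses.

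The repair is not a ``double, mutually referencing fixed point'' but a single fixed point with symmetric disjuncts inside the witness comparison: take $\phi(x)$ to be a $\Sigma_1$ formula with
\[
\mathbf{Q}\vdash\phi(x)\leftrightarrow\exists y\Bigl[\bigl(\rho_A(x,y)\vee\mathrm{Prf}_T(y,\ulcorner\neg\phi(\dot x)\urcorner)\bigr)\wedge\forall z\le y\,\neg\bigl(\rho_B(x,z)\vee\mathrm{Prf}_T(z,\ulcorner\phi(\dot x)\urcorner)\bigr)\Bigr].
\]
With this formula the machinery you already have suffices for all four implications. For (1a): if $n\in A$ and (toward a contradiction) $T\nvdash\phi(\overline n)$, then by consistency considerations the entry stage of $n$ into $A$ is a true witness whose bounded guard is true, so $\Sigma_1$-completeness yields $T\vdash\phi(\overline n)$ after all. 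For (1b): if $T\vdash\phi(\overline n)$ with least proof code $p_0$ but $n\notin A$, then reasoning in $T$ and splitting on $y\le\overline{p_0}$ versus $\overline{p_0}\le y$ (your bounded-order dichotomy) refutes every witness --- below $p_0$ the finitely many instances of $\rho_A$ and of $\mathrm{Prf}_T(\cdot,\ulcorner\neg\phi(\overline n)\urcorner)$ are provably false, and above $p_0$ the guard provably fails at $z=p_0$ --- giving $T\vdash\neg\phi(\overline n)$ and contradicting consistency. Directions (2a) and (2b) are mirror images. Without writing down this symmetric formula and checking these four cases, the proposal is a correct plan plus an accurate diagnosis of its own missing step, but not a proof.
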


\begin{lemma}\label{interpretation}
The theory $U_{\langle A,B\rangle}$ is interpretable in $\mathbf{R}$.
\end{lemma}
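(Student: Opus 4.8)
The plan is to reduce the claim to Visser's universal property of $\mathbf{R}$ (Theorem \ref{visser thm on R}): since $U_{\langle A,B\rangle}$ is r.e., it suffices to show that $U_{\langle A,B\rangle}$ is locally finitely satisfiable, and since the axioms of $U_{\langle A,B\rangle}$ are exactly the listed sentences, this amounts to showing that every finite subset of these axioms has a finite model. This is precisely the strategy already used for $\mathbf{Rep_{\sf PRF}}$ in Corollary \ref{answer of key qn}, and it avoids having to exhibit an interpretation by hand.

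Concretely, I would fix a finitely axiomatized subtheory $T_0$; its finitely many axioms mention only finitely many numerals, so I choose $N\in\mathbb{N}$ with $k\le N$ for every numeral $\overline{k}$ occurring in $T_0$. I then build the finite structure $M$ with domain $\{0,1,\dots,N\}$, interpreting $\mathbf{0}$ as $0$ and $\mathbf{S}$ as $i\mapsto i+1$ for $i<N$ with $\mathbf{S}(N)=N$ (any fixed point will do), so that $\overline{k}^M=k$ for all $k\le N$. Interpreting the predicate by $\mathbf{P}^M=A\cap\{0,\dots,N\}$, I would verify the three axiom types: the distinctness axioms $\overline{m}\neq\overline{n}$ hold because distinct numerals $\le N$ get distinct values; $\mathbf{P}(\overline{n})$ holds for $n\in A$ with $n\le N$ by definition of $\mathbf{P}^M$; and $\neg\mathbf{P}(\overline{n})$ holds for $n\in B$ with $n\le N$ precisely because $A\cap B=\emptyset$ forces $n\notin A$. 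Hence $M\models T_0$, so $U_{\langle A,B\rangle}$ is locally finitely satisfiable, and Theorem \ref{visser thm on R} yields $U_{\langle A,B\rangle}\unlhd\mathbf{R}$.

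The main (and essentially only) obstacle is fitting the distinctness axioms and the $\mathbf{P}$-axioms into a single finite model: a finite structure cannot make all numerals distinct and cannot keep $\mathbf{S}$ injective, but since only finitely many numerals occur in $T_0$ one may truncate $\mathbf{S}$ above $N$, and since $A$ and $B$ are disjoint the $\mathbf{P}$- and $\neg\mathbf{P}$-requirements never collide. It is worth stressing that only the disjointness of $A$ and $B$ is used here, whereas the recursive inseparability was what powered the essential incompleteness in Lemma \ref{G1 holds}.

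As an alternative I would consider an explicit interpretation sending $\mathbf{0},\mathbf{S}$ to themselves, keeping the whole domain, and translating $\mathbf{P}(x)$ by a $\Sigma_1$ formula $\phi(x)$ separating $A$ from $B$ as supplied by Fact \ref{disjoint pair}. Here $\mathbf{R}\vdash\phi(\overline{n})$ for $n\in A$ is immediate from the $\Sigma_1$-completeness of $\mathbf{R}$, but the direction $\mathbf{R}\vdash\neg\phi(\overline{n})$ for $n\in B$ is $\Pi_1$ and is the genuine difficulty, since $\mathbf{R}$ is not $\Pi_1$-complete and Fact \ref{disjoint pair} cannot be used as a black box. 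One would have to exploit the specific Rosser form of $\phi$ together with $\mathbf{R}$'s axioms $\sf{Ax4}$ and $\sf{Ax5}$, case-splitting via $\sf{Ax5}$ and collapsing the bounded case through $\sf{Ax4}$. Because this route is more delicate, I would present the local-finite-satisfiability argument above as the actual proof.
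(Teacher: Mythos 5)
Your main argument---local finite satisfiability of $U_{\langle A,B\rangle}$ via truncated finite models, followed by Visser's Theorem \ref{visser thm on R}---is correct and is exactly the second of the two proofs the paper itself gives for this lemma (the paper's first proof is the Fact \ref{disjoint pair} route you set aside; the paper does use that fact as a black box, noting in a footnote that the fixed-point construction behind it goes through for $\mathbf{R}$). So you have essentially reproduced the paper's proof, with the finite-model verification spelled out in more detail than the paper records.
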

\begin{proof}\label{}
By Fact \ref{disjoint pair}, there exists a formula $\phi(x)$ with one free variable such that $\mathbf{R}\vdash \phi(\overline{n})$ iff $n \in A$ and
$\mathbf{R}\vdash \neg\phi(\overline{n})$ iff $n \in B$.\footnote{The proof of Fact \ref{disjoint pair} in   \cite[Theorem 2, p.43]{Per 97} uses the fixed point theorem for the base theory $T$. Since the fixed point theorem holds for $\mathbf{R}$, Fact \ref{disjoint pair} also applies to $\mathbf{R}$.} Thus $U_{\langle A,B\rangle}$ is interpretable in $\mathbf{R}$ via interpreting $\mathbf{P}(x)$ as $\phi(x)$.

Here is another proof of Lemma \ref{interpretation}: since $U_{\langle A,B\rangle}$ is a locally finitely satisfiable r.e.~ theory, by Theorem \ref{visser thm on R}, $U_{\langle A,B\rangle}$ is interpretable in $\mathbf{R}$.
\end{proof}


Based on Je\v{r}\'{a}bek's work in \cite{Emil}, our strategy to prove that $U_{\langle A,B\rangle}$ does not  interpret $\mathbf{R}$ is to consistently extend the interpreting theory to a theory
with quantifier elimination, using the fact that the empty theory in an arbitrary language $L$ has a
model completion which we denote by $\mathbf{EC}_L$, the theory of existentially closed $L$-structures. The theory $\mathbf{EC}_{L}$  admits the elimination of quantifiers (see \cite{Emil}).
The two key tools we use to show that $U_{\langle A,B\rangle}$ does not  interpret $\mathbf{R}$ are  Theorem \ref{main tool} and Theorem \ref{general result} which essentially use properties of $\mathbf{EC}_L$.
We say a relation $R \subseteq X^2$ is \emph{asymmetric} if there are no $a, b \in X$ such that $R(a, b)$ and $R(b, a)$.

\begin{theorem}[Je\v{r}\'{a}bek, Theorem 5.1, \cite{Emil}]\label{main tool}~
For any first order language $L$ and formula $\phi(\overline{z}, \overline{x}, \overline{y})$ with $lh(\overline{x}) = lh(\overline{y})$, there is a constant $n$
with the following property. Let $M\models \mathbf{EC}_{L}$ and $\overline{u} \in M$ be such that
$M \models \exists \overline{x}_0, \cdots, \exists \overline{x}_{n-1}\bigwedge_{i<j<n}\phi(\overline{u}, \overline{x}_i, \overline{x}_j)$.
Then for every $m \in\mathbb{N}$ and an asymmetric relation $R$ on $\{0, \cdots, m-1\}$,
$M \models \exists \overline{x}_0, \cdots, \exists \overline{x}_{m-1}\bigwedge_{\langle i, j\rangle\in R}\phi(\overline{u}, \overline{x}_{i}, \overline{x}_{j})$.
\end{theorem}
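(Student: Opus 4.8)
The plan is to prove the statement in three moves: a reduction to realizability of quantifier-free configurations, a combinatorial extraction of a ``generic'' pair-type that fixes the constant $n$, and a free-amalgamation construction that realizes the arbitrary asymmetric pattern.

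First I would use that $\mathbf{EC}_{L}$ eliminates quantifiers to replace $\phi$ by a quantifier-free formula equivalent to it modulo $\mathbf{EC}_{L}$; since $M \models \mathbf{EC}_{L}$ this substitution is harmless, so throughout I may assume $\phi(\overline{z},\overline{x},\overline{y})$ is a Boolean combination of atomic $L$-formulas, i.e.\ determined by finitely many terms $t_1,\dots,t_r$ built from $\overline{z},\overline{x},\overline{y}$. Next, because $M$ is existentially closed and the empty theory has amalgamation, realizing an existential statement $\exists\overline{x}_0\cdots\exists\overline{x}_{m-1}\,\Theta$ over the parameters $\overline{u}$ in $M$ is \emph{equivalent} to the existence of \emph{some} $L$-structure that contains a copy of the substructure generated by $\overline{u}$ (with the correct quantifier-free type) together with witnesses satisfying $\Theta$: one amalgamates such a structure with $M$ over $\langle\overline{u}\rangle$ and pulls the witnesses back into $M$ by existential closedness. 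Dually, the hypothesis that $M$ carries a $\phi$-clique of size $n$ just says that \emph{some} $L$-structure over $\overline{u}$ carries such a clique. Thus the whole theorem becomes a purely structural implication about $L$-structures, with $M$ and $\mathbf{EC}_{L}$ no longer appearing.

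Second, from an $n$-clique $\overline{x}_0,\dots,\overline{x}_{n-1}$ (with $\phi(\overline{u},\overline{x}_i,\overline{x}_j)$ for all $i<j<n$) I would extract a single ordered pair whose quantifier-free type is as \emph{free} as possible: I inspect the values of the finitely many terms $t_k(\overline{u},\overline{x}_i,\overline{x}_j)$ and, by a pigeonhole/Ramsey argument on the $n$ indices, locate a pair $(\overline{x}_a,\overline{x}_b)$, $a<b$, for which no two term-values collide except when forced syntactically and no term-value falls into $\langle\overline{u}\rangle$ unless forced. This ``generic forward pair-type'' $p(\overline{x},\overline{y})$ still entails $\phi(\overline{u},\overline{x},\overline{y})$, and the bound $n$ is chosen exactly so that this extraction always succeeds; it depends only on $r$ and the arities of the symbols of $\phi$, hence only on $\phi$, which is what the statement demands. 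To realize an arbitrary asymmetric $R$ on $\{0,\dots,m-1\}$ I would then build an $L$-structure on fresh tuples $\overline{y}_0,\dots,\overline{y}_{m-1}$ by stipulating, for each $\langle i,j\rangle\in R$, that $(\overline{y}_i,\overline{y}_j)$ realizes $p$, interpreting the terms and relations so that these requirements are met by a free amalgam. Asymmetry of $R$ guarantees that for no pair do I have to impose both $p(\overline{y}_i,\overline{y}_j)$ and $p(\overline{y}_j,\overline{y}_i)$, so the forward requirements never directly contradict one another; since $p$ is generic, the only remaining danger is a shared vertex $\overline{y}_i$ that must act as ``source'' in one pair and ``target'' in another, and the freeness extracted above is precisely what lets one tuple play both roles. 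Applying the reduction of the first move then yields the desired $\exists\overline{x}_0\cdots\exists\overline{x}_{m-1}\bigwedge_{\langle i,j\rangle\in R}\phi(\overline{u},\overline{x}_i,\overline{x}_j)$ in $M$.

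The hard part will be the bookkeeping behind the second move: verifying that the free amalgam is a genuine $L$-structure, i.e.\ that the stipulated term-values respect \emph{functionality} of the function symbols, and extracting from this requirement the exact combinatorial condition on term-collisions that determines $n$. Asymmetry only rules out the head-on conflict of opposite orientations; all the subtler conflicts along shared vertices must be absorbed by the genericity of $p$, and pinning down how large $n$ must be to guarantee such a $p$ is the technical core of the argument.
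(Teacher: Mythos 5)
First, a point of comparison: this paper does not prove the statement at all --- it is imported verbatim as Theorem 5.1 of Je\v{r}\'{a}bek's \cite{Emil} and used as a black box, so the only benchmark is Je\v{r}\'{a}bek's own argument. Your overall architecture (quantifier elimination for $\mathbf{EC}_{L}$; reduction of realizability in an existentially closed model to the existence of \emph{some} $L$-structure over $\langle\overline{u}\rangle$ via amalgamation; a Ramsey-type analysis of the clique; an explicit construction for $R$) has the right shape. The first move is sound: the class of all $L$-structures amalgamates over any common substructure, so for quantifier-free $\Theta$ an existentially closed $M$ satisfies $\exists\overline{x}\,\Theta(\overline{u},\overline{x})$ iff some extension of $\langle\overline{u}\rangle^{M}$ does.

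The gap is in the second move, and it is not mere bookkeeping. (i) The pair you propose to extract --- one for which ``no two term-values collide except when forced syntactically and no term-value falls into $\langle\overline{u}\rangle$ unless forced'' --- need not exist. Take $L=\{f\}$ and $\phi(x,y):\equiv f(x)=y$: cliques of every size exist, but any clique forces $x_{1}=x_{2}=\cdots=x_{n-1}$, so every pair with both indices positive carries a collision that is imposed by the clique configuration, not by the syntax of $\phi$. What a Ramsey argument actually yields is a \emph{homogeneous} (order-indiscernible) pair type, which may be maximally degenerate; the burden of the proof is to show that this homogeneous type, with whatever collisions it contains, can be laid out along an arbitrary asymmetric $R$. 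Your sketch inverts this and assumes the degeneracy away. (ii) Your free-amalgamation step addresses only two obstructions: opposite orientations of the same pair (killed by asymmetry) and a vertex serving simultaneously as source and target (which already occurs inside a linear clique). But an asymmetric relation may contain directed cycles of length $\geq 3$, which a clique --- being a transitive tournament --- never does; composing the equational constraints of the pair type around such a cycle can force new identifications that must be checked against the inequalities in the type, and a vertex-by-vertex free construction does not go through there. This is precisely where the size of $n$ and the structure of the homogeneous type must be used, and it is the part you explicitly defer. As it stands the proposal is a plausible plan whose technical core --- the only part separating the theorem from a triviality --- is missing.
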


\begin{theorem}[Je\v{r}\'{a}bek, Theorem 4.5, \cite{Emil}]\label{general result}
For a $\Sigma_2$-axiomatized theory $T$, $T$ is interpretable in a consistent existential theory iff $T$ is weakly interpretable in $\mathbf{EC}_L$ for some language $L$.
\end{theorem}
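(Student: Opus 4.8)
The plan is to prove the two implications separately; I expect the forward direction to be routine and the converse to carry the real weight, since it is there that the $\Sigma_2$ form of the axioms and quantifier elimination for $\mathbf{EC}_L$ are needed. For the forward direction, suppose $T$ is interpretable via a translation $\tau$ in a consistent existential theory $E$ in a language $L$, so $E\vdash T^{\tau}$. First I would take a model $M\models E$. Since $\mathbf{EC}_L$ is the theory of existentially closed $L$-structures, every $L$-structure, in particular $M$, embeds into some $N\models\mathbf{EC}_L$. Existential sentences persist upward along embeddings and $E$ is existential, so $N\models E$, whence $N\models T^{\tau}$. Thus $N\models\mathbf{EC}_L+T^{\tau}$, so $\mathbf{EC}_L+T^{\tau}$ is consistent, i.e.\ $T$ is weakly interpretable in $\mathbf{EC}_L$. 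This direction uses neither the $\Sigma_2$ form of the axioms nor quantifier elimination.

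For the converse, assume $T$ is weakly interpretable in $\mathbf{EC}_L$, witnessed by a translation $\tau$ with $\mathbf{EC}_L+T^{\tau}$ consistent, and fix a model $N$ of the latter. Here I would combine quantifier elimination for $\mathbf{EC}_L$ with the fact that each axiom $\sigma$ of $T$ has the form $\exists\overline{x}\,\forall\overline{y}\,\psi$ with $\psi$ quantifier-free. The translation $\sigma^{\tau}$ then has the shape $\exists\overline{x}\,(\delta_{\tau}(\overline{x})\wedge\forall\overline{y}\,(\delta_{\tau}(\overline{y})\to\psi^{\tau}))$, and over $\mathbf{EC}_L$ the inner universal block is equivalent to a quantifier-free formula; hence $\sigma^{\tau}$ is equivalent over $\mathbf{EC}_L$ to an existential sentence $\sigma'$. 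Collecting these, the existential theory $E:=\{\sigma':\sigma\in T\}$ is consistent, since $N\models\mathbf{EC}_L+T^{\tau}$ forces $N\models\sigma'$ for every $\sigma$. This is my candidate existential theory interpreting $T$.

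The hard part will be verifying that $E$ (or a suitable variant) genuinely \emph{interprets} $T$ rather than merely being consistent. The difficulty is that $\sigma'$ and $\sigma^{\tau}$ agree only modulo $\mathbf{EC}_L$: an arbitrary model of the purely existential theory $E$ need not be existentially closed, and $\Sigma_2$ content such as $\sigma^{\tau}$ is not preserved when passing between a structure and its extensions, so $E\vdash\sigma'$ does not by itself yield $E\vdash\sigma^{\tau}$. To close this gap I would try to arrange the interpretation so that the universal conditions are forced by existential data: passing to existentially closed extensions of models of $E$, transferring interpretability through elementary equivalence, and using quantifier elimination to recover $\sigma^{\tau}$ in the closed models. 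The combinatorial control over existentially closed structures supplied by Theorem \ref{main tool} is the natural device for guaranteeing that one fixed interpretation works uniformly across all models of $E$. Making this uniform passage precise, so that the $\Sigma_2$ axioms become honestly provable from existential ones under the interpretation, is the crux of the argument.
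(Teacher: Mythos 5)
The paper itself gives no proof of this statement: it is imported verbatim from Je\v{r}\'{a}bek (Theorem 4.5 of \cite{Emil}), so your attempt can only be measured against the argument there. Your forward direction is correct and complete: embed a model of the consistent existential theory $E$ into an existentially closed structure $N$, use upward persistence of existential sentences to get $N\models E$, hence $N\models E\vdash T^{\tau}$, so $\mathbf{EC}_L+T^{\tau}$ is consistent.

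The converse, however, is left with a genuine gap, and the repair you sketch points at the wrong tool. You correctly reduce to the situation where each translated axiom has the shape $\exists\overline{x}\,\forall\overline{y}\,\chi(\overline{x},\overline{y})$ with $\chi$ quantifier-free (after replacing the formulas of $\tau$ by quantifier-free equivalents via elimination of quantifiers in $\mathbf{EC}_L$), and you correctly see that the existential surrogates $\sigma'$ do not obviously prove $\sigma^{\tau}$ outside existentially closed models. The missing idea is a persistence argument. Let $\theta(\overline{x})$ be the quantifier-free formula equivalent over $\mathbf{EC}_L$ to $\forall\overline{y}\,\chi(\overline{x},\overline{y})$. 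The sentence $\forall\overline{x}\,\forall\overline{y}\,(\theta(\overline{x})\to\chi(\overline{x},\overline{y}))$ is universal and holds in every model of $\mathbf{EC}_L$; since every $L$-structure embeds into such a model and universal sentences pass down to substructures, it holds in \emph{every} $L$-structure, i.e., it is logically valid. Hence the existential axiom $\exists\overline{x}\,\theta(\overline{x})$ entails $\exists\overline{x}\,\forall\overline{y}\,\chi(\overline{x},\overline{y})$ in arbitrary models, so the existential theory $E=\{\exists\overline{x}\,\theta_{\sigma}:\sigma\in T\}$ (consistent because it holds in your fixed $N$) interprets $T$ via $\tau$ itself; this is what makes the $\Sigma_2$ axioms ``honestly provable'' from existential ones. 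Your proposed devices do not supply this step: Theorem \ref{main tool} is a combinatorial statement about asymmetric relations in existentially closed structures, used in this paper only to show that the theory $\mathbf{T}$ of Definition \ref{theory T} is \emph{not} weakly interpretable in $\mathbf{EC}_L$, and it plays no role in the equivalence; and passing to existentially closed extensions of models of $E$ cannot work directly, since $\Sigma_2$ content is not preserved along embeddings in either direction, as you yourself observe.
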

Especially, Je\v{r}\'{a}bek \cite{Emil} showed that: (1) if a theory is interpretable in a consistent quantifier-free or existential theory,
it is weakly interpretable in $\mathbf{EC}_{L}$ for some  language $L$, and the interpretation can be taken quantifier-free; (2) if a $\Sigma_2$ theory is weakly interpretable in $\mathbf{EC}_{L}$, it is interpretable in a quantifier-free theory.

\begin{definition}\label{theory T}
Consider the following theory $\mathbf{T}$ in the language $\langle\in\rangle$ axiomatized by the sentences
$\exists z, x_0, \cdots, x_{n-1}(\bigwedge_{i<j<n} x_i \neq x_j\wedge \forall y (y \in z \leftrightarrow\bigvee_{i<n}
y = x_i))$ for all $n \in\mathbb{N}$.
\end{definition}

\begin{proposition}\label{main corollary}~
\begin{enumerate}[(1)]
  \item $\mathbf{T}$  is not weakly interpretable in $\mathbf{EC}_{L}$ for any language $L$.
  \item $\mathbf{R}$ is  not weakly interpretable in $\mathbf{EC}_{L}$ for any language $L$.
  \item If $\mathbf{R}$ is  interpretable in $U_{\langle A,B\rangle}$, then $\mathbf{R}$ is  weakly interpretable in $\mathbf{EC}_{L}$ for some language $L$.
\end{enumerate}
\end{proposition}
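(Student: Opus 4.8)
The plan is to prove the three parts in the order (3), then (2), then (1), with (1) as the combinatorial heart; parts (3) and (2) are short reductions, while (1) is where the real work and the main obstacle lie.

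For (3), I would invoke Theorem \ref{general result} directly. Assume $\mathbf{R}\unlhd U_{\langle A,B\rangle}$. The theory $U_{\langle A,B\rangle}$ is axiomatized entirely by variable-free literals ($\overline{m}\neq\overline{n}$, $\mathbf{P}(\overline{n})$, $\neg\mathbf{P}(\overline{n})$), so it is a quantifier-free, in particular \emph{existential}, theory; and it is consistent because $A\cap B=\emptyset$ (interpret $\mathbf{P}$ as $A$ in the standard $\{\mathbf{0},\mathbf{S}\}$-model). On the other side, $\mathbf{R}$ is $\Pi_1$-axiomatized, hence $\Sigma_2$-axiomatized: \textsf{Ax1}--\textsf{Ax3} are ground and \textsf{Ax4}, \textsf{Ax5} are universal. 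Thus $\mathbf{R}$ is a $\Sigma_2$-theory interpretable in a consistent existential theory, so one direction of Theorem \ref{general result} gives that $\mathbf{R}$ is weakly interpretable in $\mathbf{EC}_L$ for some $L$.

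For (2), I would reduce to (1) by first showing $\mathbf{T}\unlhd\mathbf{R}$ and then using that weak interpretability in $\mathbf{EC}_L$ is inherited downward along $\unlhd$: if $\mathbf{R}$ is interpretable in a consistent extension $S\supseteq\mathbf{EC}_L$ (same language) and $\mathbf{T}\unlhd\mathbf{R}$, then $\mathbf{T}\unlhd S$ by composing interpretations, so $\mathbf{T}$ is weakly interpretable in $\mathbf{EC}_L$, contradicting (1). To obtain $\mathbf{T}\unlhd\mathbf{R}$, interpret $x\in y$ as $x\leq y\wedge x\neq y$ over the full domain. For the $\mathbf{T}$-axiom of size $n$, the witnesses $z=\overline{n}$ and $x_i=\overline{i}$ work: \textsf{Ax3} makes the $\overline{i}$ distinct; \textsf{Ax4} yields $y\leq\overline{n}\to\bigvee_{i\leq n}y=\overline{i}$, so after discarding the case $y=\overline{n}$ the forward direction of the biconditional holds; and \textsf{Ax4} with bound $\overline{i}$ together with \textsf{Ax5} forces $\overline{i}\leq\overline{n}$ for $i<n$, giving the backward direction. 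Hence $\mathbf{R}$ proves every $I$-translated instance of the $\mathbf{T}$-schema.

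The main obstacle is (1). Assume for contradiction that $\mathbf{T}$ is weakly interpretable in $\mathbf{EC}_L$, witnessed by an interpretation $\tau$ (possibly multi-dimensional and with parameters) with $\mathbf{EC}_L+\mathbf{T}^{\tau}$ consistent; fix $M\models\mathbf{EC}_L+\mathbf{T}^{\tau}$, and let $\delta$, $=_\tau$, $E$ be the $L$-formulas interpreting the domain, equality, and $\in$, with $d$ the dimension. The plan is to feed Theorem \ref{main tool} the $L$-formula
\[
\phi(\overline{z};\overline{x},\overline{y}):=\delta(\overline{x})\wedge\delta(\overline{y})\wedge E(\overline{x},\overline{z})\wedge E(\overline{y},\overline{z})\wedge\neg(\overline{x}=_\tau\overline{y}),
\]
where the parameter tuple $\overline{z}$ also absorbs any interpretation parameters; let $N$ be the threshold constant it returns for this fixed $\phi$. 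Now apply the size-$N$ instance of the $\mathbf{T}$-schema: in $M$ it produces a domain element $\overline{c}$ (the set $z$) together with $N$ pairwise $=_\tau$-distinct domain elements, all $E$-members of $\overline{c}$, with no further domain members. These witness the premise $M\models\exists\overline{x}_0\cdots\exists\overline{x}_{N-1}\bigwedge_{i<j<N}\phi(\overline{c};\overline{x}_i,\overline{x}_j)$, so Theorem \ref{main tool} applies with $m=N+1$ and the asymmetric relation $R=\{\langle i,j\rangle:i<j\}$ on $\{0,\dots,N\}$, yielding $N+1$ domain elements that are pairwise $=_\tau$-distinct and all $E$-members of $\overline{c}$; that is, $\overline{c}$ has at least $N+1$ distinct members, contradicting its ``exactly $N$ members'' clause. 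The delicate points I expect to handle with care are: matching the threshold $N$ to the right schema instance (harmless, since $N$ is determined by the fixed $\phi$ before the instance is chosen); the bookkeeping of parameters, folded into $\overline{z}$; and verifying that every index $0,\dots,N$ genuinely occurs in $R$, so that all $N+1$ witnesses are forced into $\overline{c}$ and pairwise distinct, which is exactly the cardinality violation driving the contradiction.
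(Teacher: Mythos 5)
Your proposal is correct and follows essentially the same route as the paper: part (3) via Theorem \ref{general result}, part (2) by reducing to (1) through $\mathbf{T}\unlhd\mathbf{R}$, and part (1) by applying Theorem \ref{main tool} to the two-variable ``both are members of $\overline{z}$ and distinct'' formula together with a chain of length $N+1$. Your write-up in fact supplies details the paper leaves implicit (the explicit interpretation of $\in$ as $x\leq y\wedge x\neq y$, and the observation that Theorem \ref{general result} applies because $U_{\langle A,B\rangle}$ is a consistent \emph{existential} theory and $\mathbf{R}$ is $\Sigma_2$-axiomatized), so there are no gaps.
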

\begin{proof}\label{}
(1): Suppose this does not hold and apply Theorem \ref{main tool} to the formula which interprets   $\bigwedge_{i<j<n} x_i \neq x_j\wedge \forall y (y \in z \leftrightarrow\bigvee_{i<n}
y = x_i)$ and  $R$ a chain longer than $n$ to get a contradiction.

(2):  Note that $\mathbf{T}$ is interpretable in $\mathbf{R}$. Since $\mathbf{T}$  is not weakly interpretable in $\mathbf{EC}_{L}$ for any language $L$, $\mathbf{R}$ is  not weakly interpretable in $\mathbf{EC}_{L}$ for any language $L$.

(3): This follows from Theorem \ref{general result} since $U_{\langle A,B\rangle}$ is a consistent  r.e.~ theory.
\end{proof}

\begin{theorem}\label{main thm}
For any recursively inseparable pair $\langle A,B\rangle$, there is a r.e.~ theory $U_{\langle A,B\rangle}$ such that $\sf G1$  holds for $U_{\langle A,B\rangle}$  and $U_{\langle A,B\rangle}\lhd\mathbf{R}$.\footnote{However, Theorem \ref{main thm} does not tell us more information about the theory  $U_{\langle S,T\rangle}$ and $U_{\langle U,V\rangle}$ for different recursively inseparable pairs $\langle S,T\rangle$ and $\langle U,V\rangle$: e.g.~ whether $U_{\langle S,T\rangle}$ and $U_{\langle U,V\rangle}$ have the same degree of interpretation or the same degree of Turing reducibility.}
\end{theorem}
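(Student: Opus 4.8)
The plan is to verify the three claims that together unpack $U_{\langle A,B\rangle}\lhd\mathbf{R}$ and $\sf G1$, treating $\lhd$ as the conjunction of $U_{\langle A,B\rangle}\unlhd\mathbf{R}$ and $\mathbf{R}\ntrianglelefteq U_{\langle A,B\rangle}$. Almost all of the work has been isolated into the preceding lemmas and into Proposition \ref{main corollary}, so the proof of Theorem \ref{main thm} is a matter of gluing these together and checking that they apply to the fixed recursively inseparable pair $\langle A,B\rangle$.

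First, that $\sf G1$ holds for $U_{\langle A,B\rangle}$ is precisely Lemma \ref{G1 holds}, which I would cite directly; its proof is where recursive inseparability (rather than mere disjointness) of $\langle A,B\rangle$ is genuinely used, since one needs $X\cup Y\neq\mathbb{N}$ for the two recursive-enumerable sets of provably-$\mathbf{P}$ and provably-$\neg\mathbf{P}$ numerals. Second, that $U_{\langle A,B\rangle}\unlhd\mathbf{R}$ is Lemma \ref{interpretation}, which I would simply invoke (either via a $\Sigma_1$ formula representing the pair over $\mathbf{R}$ through Fact \ref{disjoint pair}, or via local finite satisfiability and Theorem \ref{visser thm on R}). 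Both of these steps are immediate.

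The only substantive step is $\mathbf{R}\ntrianglelefteq U_{\langle A,B\rangle}$, which I would prove by contradiction. Suppose $\mathbf{R}$ were interpretable in $U_{\langle A,B\rangle}$. The structural feature to exploit is that $U_{\langle A,B\rangle}$ is axiomatized entirely by atomic and negated-atomic sentences, so it is a consistent existential (indeed quantifier-free) r.e.\ theory; this is exactly the input to Proposition \ref{main corollary}(3), which through Je\v{r}\'{a}bek's Theorem \ref{general result} (applicable since $\mathbf{R}$ is $\Pi_1$-, hence $\Sigma_2$-, axiomatized) would force $\mathbf{R}$ to be weakly interpretable in $\mathbf{EC}_L$ for some language $L$. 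But Proposition \ref{main corollary}(2) asserts the opposite: $\mathbf{R}$ is not weakly interpretable in $\mathbf{EC}_L$ for any $L$, since the subtheory $\mathbf{T}\unlhd\mathbf{R}$ already fails to be (Proposition \ref{main corollary}(1), whose proof feeds a chain longer than the constant $n$ of Theorem \ref{main tool} into the interpreting formula). This contradiction yields $\mathbf{R}\ntrianglelefteq U_{\langle A,B\rangle}$, and combining it with $U_{\langle A,B\rangle}\unlhd\mathbf{R}$ gives $U_{\langle A,B\rangle}\lhd\mathbf{R}$, completing the proof.

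The genuine difficulty of the whole development lies entirely in this third step, and more precisely in the cited results of Je\v{r}\'{a}bek: the model-completion combinatorics of Theorem \ref{main tool} and the interpretability dichotomy of Theorem \ref{general result}. Since those are available here as black boxes, the remaining obstacle for Theorem \ref{main thm} itself is merely bookkeeping---confirming that $U_{\langle A,B\rangle}$ meets the existential/r.e.\ hypotheses needed to invoke Proposition \ref{main corollary}(3), and that each of the three component facts has been established uniformly for an arbitrary recursively inseparable pair, so that the conclusion holds for every such $\langle A,B\rangle$.
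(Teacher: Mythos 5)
Your proposal is correct and follows essentially the same route as the paper, which likewise obtains the theorem by combining Lemma \ref{G1 holds}, Lemma \ref{interpretation}, and Proposition \ref{main corollary}(2)--(3) to rule out $\mathbf{R}\unlhd U_{\langle A,B\rangle}$. Your added observation that $U_{\langle A,B\rangle}$ is axiomatized by atomic and negated-atomic sentences, hence is a consistent existential theory as required by Theorem \ref{general result}, is a detail the paper leaves implicit but is exactly the right justification.
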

\begin{proof}\label{}
By  Proposition \ref{main corollary}(2)-(3), $\mathbf{R}$ is not interpretable in $U_{\langle A, B\rangle}$. From Lemma \ref{G1 holds} and  Lemma \ref{interpretation}, we have $\sf G1$  holds for $U_{\langle A,B\rangle}$ and $U_{\langle A, B\rangle}\lhd\mathbf{R}$.
\end{proof}

\begin{corollary}
Let $S$ be a consistent existential theory. Then  the following are equivalent:
\begin{enumerate}[(1)]
  \item $\sf G1$  holds for $S$  and $S\lhd\mathbf{R}$ (i.e. $S$ is a solution for Question \ref{main qn});
  \item $S$ is  essentially undecidable and locally finitely satisfiable.
\end{enumerate}
\end{corollary}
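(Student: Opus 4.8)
The plan is to prove the two implications by translating each clause of (1) into the corresponding clause of (2) using the characterizations already established, the only genuine content being that a consistent existential theory can never interpret $\mathbf{R}$. First I would isolate the observation that does most of the work: \emph{if $S$ is a consistent existential theory, then $\mathbf{R}\ntrianglelefteq S$}. Indeed, the axioms $\sf{Ax1}$--$\sf{Ax3}$ of $\mathbf{R}$ are quantifier-free and $\sf{Ax4}$, $\sf{Ax5}$ are universal, so $\mathbf{R}$ is $\Pi_1$-axiomatized and a fortiori $\Sigma_2$-axiomatized. Hence if $\mathbf{R}$ were interpretable in $S$, then, since $S$ is a consistent existential theory, Theorem \ref{general result} would give that $\mathbf{R}$ is weakly interpretable in $\mathbf{EC}_{L}$ for some language $L$, directly contradicting Proposition \ref{main corollary}(2). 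Thus $\mathbf{R}\ntrianglelefteq S$ holds unconditionally for such $S$.

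With this in hand, for $(1)\Rightarrow(2)$ I would argue as follows. By Proposition \ref{eq pro}, ``$\sf G1$ holds for $S$'' is equivalent to ``$S$ is essentially undecidable'', which yields the first half of (2). From $S\lhd\mathbf{R}$ I extract $S\unlhd\mathbf{R}$; since $S$ is r.e.\ and interpretable in $\mathbf{R}$, Visser's Theorem \ref{visser thm on R} gives that $S$ is locally finitely satisfiable, the second half of (2).

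For $(2)\Rightarrow(1)$, essential undecidability gives ``$\sf G1$ holds for $S$'' again by Proposition \ref{eq pro}. Local finite satisfiability of the r.e.\ theory $S$ gives $S\unlhd\mathbf{R}$ by Theorem \ref{visser thm on R}; combining this with the unconditional $\mathbf{R}\ntrianglelefteq S$ established in the first paragraph yields $S\lhd\mathbf{R}$. Hence (1) holds.

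The only point requiring care — and the sole place where the hypothesis ``existential'' is actually used — is the unconditional non-interpretability $\mathbf{R}\ntrianglelefteq S$; everything else is a mechanical repackaging of Proposition \ref{eq pro} and Theorem \ref{visser thm on R}. Concretely, I expect the main thing to verify is that $\mathbf{R}$ meets the $\Sigma_2$-axiomatizability hypothesis of Theorem \ref{general result} (it does, being $\Pi_1$) and that being existential makes $S$ exactly the ``consistent existential theory'' to which that theorem applies. Once those hypotheses are checked, the corollary is immediate.
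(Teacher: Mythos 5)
Your proposal is correct and follows essentially the same route as the paper: it derives $\mathbf{R}\ntrianglelefteq S$ from Theorem \ref{general result} together with Proposition \ref{main corollary}(2), and then assembles the equivalence from Proposition \ref{eq pro} and Theorem \ref{visser thm on R}. The only difference is that you spell out the (correct) verification that $\mathbf{R}$ is $\Pi_1$- and hence $\Sigma_2$-axiomatized, a hypothesis the paper leaves implicit.
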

\begin{proof}\label{}
As a corollary of Theorem \ref{general result} and Proposition \ref{main corollary}(2), any consistent existential theory does not interpret $\mathbf{R}$. Thus, from this, Proposition \ref{eq pro} and Theorem \ref{visser thm on R}, we have the equivalence.
\end{proof}




From Theorem \ref{visser thm on R}, $\mathbf{R}$ has the  universality property: every locally finitely satisfiable r.e.~ theory is interpretable in it. Albert Visser asked the following question:
\begin{question}[Visser]\label{}
Would $S$ with $S\unlhd \mathbf{R}$ such that $\sf G1$  holds for  $S$ share the universality property
of $\mathbf{R}$ that every locally finitely satisfiable r.e.~ theory is interpretable in it.
\end{question}
As a corollary of Theorem \ref{main thm}, the answer for this question is negative.
We have shown that for any recursively inseparable pair $\langle A,B\rangle$, there is a theory $U_{\langle A,B\rangle}$ such that $\sf G1$  holds for $U_{\langle A,B\rangle}$ and $U_{\langle A,B\rangle}\lhd \mathbf{R}$. The theory $\mathbf{R}$ is locally finitely satisfiable, but $\mathbf{R}$  is not interpretable in $U_{\langle A,B\rangle}$.
Take another example: the theory $\mathbf{T}$ as in Definition \ref{theory T} is locally finitely satisfiable, but $\mathbf{T}$ is not interpretable in $U_{\langle A,B\rangle}$ (if $\mathbf{T}$  is interpretable in $U_{\langle A,B\rangle}$, by Theorem \ref{general result},  $\mathbf{T}$  is  weakly interpretable in $\mathbf{EC}_{L}$ for some language $L$ which contradicts Proposition \ref{main corollary}(1)).
Thus, for any recursively inseparable pair $\langle A,B\rangle$, the theory $U_{\langle A,B\rangle}$ is a counterexample for Visser's Question. This shows  the speciality of $\mathbf{R}$: Theorem \ref{visser thm on R} provides a unique characterization of $\mathbf{R}$.

Define ${\sf D}=\{S: S\lhd \mathbf{R}$ and $\sf G1$  holds for the theory $S$\}. We have shown that we could find many witnesses for ${\sf D}$. We could naturally examine the  structure of $\langle {\sf D}, \lhd\rangle$.
A natural question is: whether the similar results as in Theorem \ref{Visser on Q} and Theorem \ref{incomparable thm} apply to the structure  $\langle {\sf D}, \lhd\rangle$. About the structure of $\langle {\sf D}, \lhd\rangle$, we could naturally ask:

\begin{question}\label{open qn}
\begin{enumerate}[(1)]~
  \item   Is $\langle {\sf D}, \lhd\rangle$ well founded (or is it that for any $S\in {\sf D}$, there is $T\in {\sf D}$ such that $T\lhd S$)?
  \item Are any two elements of $\langle {\sf D}, \lhd\rangle$ comparable (i.e.~ is it that for any $S,T\in {\sf D}$, we have either $S\unlhd T$ or $T\unlhd S$)?
  \item  Could we find a  theory $S$ with a minimal degree of  interpretation such that $\sf G1$  holds for $S$?
\end{enumerate}
\end{question}

In the rest of this paper, we will show that if we
consider the Turing degree structure  instead of the interpretation degree structure of ${\sf D}$, we have definite answers for Question \ref{open qn}.

\section{The limit of applicability of $\sf G1$ w.r.t.~ Turing reducibility}\label{section 3}

In this section, we examine the limit of applicability of $\sf G1$ w.r.t.~ Turing reducibility and show that there is no theory with a minimal degree of Turing reducibility for which $\sf G1$  holds based on Shoenfield's work using some recursion theory.

Let $\mathcal{R}$ be the structure of the r.e.~ degrees with the ordering $\leq_{T}$ induced by Turing
reducibility  with the least element $\mathbf{0}$ and the greatest element $\mathbf{0}^{\prime}$.
Define ${\sf \overline{D}}=\{S: S<_{T} \mathbf{R}$ and $\sf G1$  holds for the theory $S$\}. A natural question is to examine the structure of $\langle {\sf \overline{D}}, <_{T}\rangle$. Now, we will show that the structure $\langle {\sf \overline{D}}, <_{T}\rangle$ is much simpler than $\langle {\sf D}, \lhd\rangle$ and we have answers to Question \ref{open qn} for the structure $\langle {\sf \overline{D}}, <_{T}\rangle$ based on Shoenfield's work using some recursion theory.

\begin{theorem}[Shoenfield, Theorem 1, \cite{Shoenfield 58}]\label{Shoenfield first}
If $A$ is recursively enumerable and not recursive, there is a recursively inseparable pair $\langle B, C\rangle$ such that $A$, $B$ and $C$ have the same Turing degree.
\end{theorem}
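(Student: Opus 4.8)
The plan is to build $B$ and $C$ directly by a stage-by-stage enumeration that simultaneously (i) codes $A$ into each of $B$ and $C$ so that $A\leq_T B$ and $A\leq_T C$, and (ii) meets, for every $e$, a diagonalization requirement $R_e$ ensuring that the recursive set $X_e=\varphi_e^{-1}(\{1\})$ (when $\varphi_e$ is a total $\{0,1\}$-valued function) fails to separate $B$ from $C$; crucially, the enumeration will only ever act \emph{with $A$-permission}, which secures $B\leq_T A$ and $C\leq_T A$. Since $A$ is non-recursive it is infinite, so I fix a one-to-one recursive enumeration $\{A_s\}$ with at most one new element per stage. I then partition $\mathbb{N}$ into pairwise disjoint recursive regions: coding locations $d^B_n$ (for $A\leq_T B$) and $d^C_n$ (for $A\leq_T C$), and for each $e$ an infinite recursive set of candidate witnesses $x_e^0<x_e^1<\cdots$ reserved for $R_e$.

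The construction runs as follows. At stage $s$, if $n$ is the element (if any) entering $A$, I put $d^B_n$ into $B$ and $d^C_n$ into $C$; and for each $e$ with $R_e$ not yet satisfied and each unused candidate $x=x_e^k$ with $\varphi_{e,s}(x)\!\downarrow=v\in\{0,1\}$, if $A$ permits $x$ at stage $s$ (some $m\leq x$ enters $A$ at stage $s$), I act: put $x$ into $B$ if $v=0$ and into $C$ if $v=1$, and declare $R_e$ satisfied. Because distinct requirements use disjoint candidate sets, each witness is assigned to at most one of $B,C$, and the coding regions are disjoint from one another and from the witness region; hence $B$ and $C$ are disjoint r.e.\ sets, and there is no injury between requirements (permission is not a scarce resource, so no priority argument is needed).

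The degree verifications are routine. The equivalences $n\in A\iff d^B_n\in B\iff d^C_n\in C$ give $A\leq_T B$ and $A\leq_T C$. Conversely, the coding part of $B$ is decidable from the oracle $A$, while the witness part of $B$ enters only with $A$-permission, so by the standard permitting computation ($x\in B$ iff $x\in B_s$ for the stage $s$ at which $A\cap[0,x]$ has stabilized, a stage computable from $A$) one gets $B\leq_T A$, and likewise $C\leq_T A$. Hence $A\equiv_T B\equiv_T C$, so all three share one Turing degree.

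The main obstacle --- and the only place where the hypothesis that $A$ is non-recursive is used --- is showing that every $R_e$ is met, i.e.\ the recursive inseparability of $\langle B,C\rangle$. Suppose some $R_e$ is never satisfied while $\varphi_e$ is a genuine candidate separator (total and $\{0,1\}$-valued); I would argue this forces $A$ to be recursive, a contradiction. For each candidate $x_e^k$ there is a least stage $s_k$ with $\varphi_{e,s_k}(x_e^k)\!\downarrow$, and ``$R_e$ never acts'' means that for every $k$ no $m\leq x_e^k$ enters $A$ after stage $s_k$; since the $x_e^k$ are unbounded and $s_k$ is computable (as $\varphi_e$ is total), one decides $m\in A$ by choosing $k$ with $x_e^k\geq m$ and testing $m\in A_{s_k}$, making $A$ recursive. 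Therefore each $R_e$ acts, producing either $x\in B$ with $\varphi_e(x)=0$ or $x\in C$ with $\varphi_e(x)=1$, so $X_e$ does not separate $B$ from $C$; as every recursive set arises as such an $X_e$, the pair $\langle B,C\rangle$ is recursively inseparable, completing the construction.
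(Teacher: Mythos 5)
The paper offers no proof of this statement: it is quoted as Theorem 1 of Shoenfield's \emph{Degrees of formal systems} and used as a black box, so there is nothing internal to compare against. Your permitting-plus-coding construction is correct and is essentially the standard argument for realizing every nonzero r.e.\ degree by a recursively inseparable pair: the coding locations give $A\leq_T B$ and $A\leq_T C$; the permission clause gives $B,C\leq_T A$ by the usual stabilization computation; disjointness of $B$ and $C$ follows from the pairwise disjoint reserved regions together with the uniqueness of the value $v=\varphi_e(x)$ once the computation converges; and the inseparability argument correctly converts ``$R_e$ never acts'' into a decision procedure for $A$ using the computable stages $s_k$ and the unboundedness of the witnesses $x_e^k$. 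The only point worth flagging is cosmetic: if $R_e$ never acts, then permission for $x_e^k$ is ruled out at stage $s_k$ itself as well as after it (an entry at stage $s_k$ would already trigger the action), which is exactly what makes the test $m\in A_{s_k}$ sound despite the ``after stage $s_k$'' phrasing.
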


Now, we will show that for any Turing degree $\mathbf{0}< \mathbf{d}<\mathbf{0}^{\prime}$, there is a theory $U$ such that $\sf G1$  holds for $U$, $U<_{T} \mathbf{R}$ and $U$ has Turing degree $\mathbf{d}$ (c.f. Theorem \ref{key theorem}). The following
theorem of Shoenfield is essential for the proof of Theorem \ref{key theorem}. To make the reader have a better sense of how the theory $U$ in Theorem \ref{key theorem} is constructed, we provide details of the proof of Theorem \ref{Shoenfield}. Feferman \cite{Feferman 57} also proved that for any r.e.~ Turing
degree one can design a formal theory whose corresponding decision problem
is of the same degree (however, it is not clear whether such a formal theory is essentially undecidable).

\begin{theorem}[Shoenfield, Theorem 2, \cite{Shoenfield 58}]\label{Shoenfield}
Let $A$ be recursively enumerable and not recursive. Then there is a consistent axiomatizable theory $T$ having one non-logical symbol which is essentially undecidable and has the same Turing degree as $A$.
\end{theorem}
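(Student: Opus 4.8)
The plan is to reduce the theorem to the recursively inseparable setting of Section \ref{section 3} and then perform a Turing-degree computation. First I would invoke Theorem \ref{Shoenfield first}: since $A$ is recursively enumerable and not recursive, there is a recursively inseparable pair $\langle B, C\rangle$ whose members each have the same Turing degree as $A$, so that $A \equiv_T B \equiv_T C$ and $B \oplus C \equiv_T A$. This replaces the single set $A$ by a disjoint r.e.\ pair, which is precisely the data that drives the essential-undecidability argument of Lemma \ref{G1 holds}.

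Next I would build the witnessing theory $T$ in a language with a single binary relation symbol $R$, so as to meet the ``one non-logical symbol'' requirement. The idea is to mimic the theory $U_{\langle B, C\rangle}$, whose three-symbol language $\{\mathbf{0}, \mathbf{S}, \mathbf{P}\}$ must now be compressed into $R$ alone. I would fix one-free-variable formulas $\nu_n(x)$ that define, along an $R$-configuration of controlled size, an $\omega$-sequence of ``numerals,'' together with a definable ``flag'' formula $\mathrm{Fl}(x)$ playing the role of $\mathbf{P}$. The axioms of $T$ are then: (i)\ sentences forcing the defined numerals $\nu_0, \nu_1, \ldots$ to be pairwise distinct; (ii)\ for each $n \in B$, a sentence asserting $\mathrm{Fl}$ of the $n$-th numeral; (iii)\ for each $n \in C$, its negation. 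Since $B$ and $C$ are r.e.\ and these axioms are generated uniformly, $T$ is axiomatizable, and being satisfiable in a suitable infinite $R$-structure it is consistent. Essential undecidability is then immediate by repeating the argument of Lemma \ref{G1 holds}: for any consistent r.e.\ extension $S \supseteq T$, the two disjoint r.e.\ sets $\{n : S \vdash \mathrm{Fl}(\nu_n)\}$ and $\{n : S \vdash \neg\mathrm{Fl}(\nu_n)\}$ would separate $B$ and $C$, contradicting recursive inseparability; hence $S$ is incomplete, and by Proposition \ref{eq pro} the theory $T$ is essentially undecidable.

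The remaining, and central, task is the degree computation $T \equiv_T A$. One direction is easy: by the design of the axioms one checks that $n \in B \iff T \vdash \mathrm{Fl}(\nu_n)$ and $n \in C \iff T \vdash \neg\mathrm{Fl}(\nu_n)$, where the forward implications are axioms and the backward ones hold because $T$ has models realizing the opposite flag-value whenever $n$ lies outside the relevant set. These yield $B, C \leq_m T$, so $A \equiv_T B \leq_T T$. The hard part will be the converse $T \leq_T A$: one must show that the theorem set of $T$ is decidable relative to the oracle $B \oplus C \equiv_T A$. Here the single-symbol encoding must be chosen freely enough that provability in $T$ is governed entirely by the atomic flag-data; concretely, I would argue that an arbitrary sentence $\phi$ is a theorem of $T$ iff it holds in a canonical family of $R$-structures whose only nontrivial commitments are the finitely many flag-values of numerals occurring in $\phi$, so that deciding $T \vdash \phi$ reduces to finitely many membership queries to $B$ and $C$. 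Establishing this amounts to a relativized completeness (or quantifier-elimination) analysis of the specially built theory, and it is the step where the construction of $T$ must be engineered with care. Granting it, $T \leq_T A$ follows, and combined with $A \leq_T T$ we obtain $\deg(T) = \deg(A)$, completing the proof.
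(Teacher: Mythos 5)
Your overall skeleton matches the paper's: reduce to a recursively inseparable pair $\langle B,C\rangle$ of the same degree as $A$ via Theorem \ref{Shoenfield first}, build a theory in a language with one binary relation symbol whose axioms record membership in $B$ and $C$, get essential undecidability from inseparability exactly as in Lemma \ref{G1 holds}, and then compute the degree. The easy direction $B,C\leq_T T$ and the inseparability argument are fine. But the step you explicitly defer --- ``an arbitrary sentence $\phi$ is a theorem of $T$ iff it holds in a canonical family of $R$-structures whose only nontrivial commitments are the finitely many flag-values of numerals occurring in $\phi$'' --- is not a routine verification to be granted; it is the mathematical heart of the theorem, and your construction is not set up to deliver it. The paper does not try to compress the numeral-plus-predicate language $\{\mathbf{0},\mathbf{S},\mathbf{P}\}$ into a binary relation. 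Instead it takes $T$ to say that $R$ is an equivalence relation, lets $\Phi_n$ assert the existence of an equivalence class with exactly $n$ elements, adds the axiom that there is at most one class of each finite size, and adopts $\Phi_n$ for $n\in B$ and $\neg\Phi_n$ for $n\in C$. For \emph{that} specific theory, Janiczak's quantifier-elimination result (Lemma \ref{Janiczak's Lemma}) says every sentence is effectively equivalent to a disjunctive normal form in the $\Phi_n$, which is precisely what reduces $T\vdash\phi$ to finitely many oracle queries to $B$ and $C$ and yields $T\leq_T B\oplus C\equiv_T A$.

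By contrast, a successor-style encoding of numerals $\nu_n(x)$ inside a single binary relation, with only ``the $\nu_n$ are pairwise distinct'' plus flag axioms, leaves the consequence relation of $T$ uncontrolled: sentences of $L(R)$ can make global structural assertions about $R$ (cycles, branching, cardinalities of definable sets) whose provability is not determined by finitely many flag-values, and nothing in your axioms forces a completeness or elimination theorem for such sentences. Without an analogue of Lemma \ref{Janiczak's Lemma} for your encoding, the upper bound $T\leq_T A$ is unproven, and it is not clear the encoding you sketch admits one. To repair the proof, replace the ad hoc compression by the equivalence-relation theory (or any theory for which an effective normal-form theorem in the sentences $\Phi_n$ is actually available) and cite or prove the elimination lemma; that is exactly what the paper does.
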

\begin{proof}\label{}
By Theorem \ref{Shoenfield first}, pick a recursively inseparable pair $\langle B, C\rangle$ such that $A$, $B$ and $C$ have the same Turing degree. Now we define the theory $T$ with $L(T)=\{R\}$ where $R$ is a binary relation symbol. Theory $T$ contains axioms asserting
that $R$ is an equivalence relation. Let $\Phi_n$ be the statement that there is an
equivalence class of $R$ consisting of $n$ elements. Then, as axioms of $T$ we adopt $\Phi_n$ for all $n\in B$ and $\neg\Phi_n$ for all $n\in C$. Finally, for each $n$ we
adopt an axiom asserting there is at most one equivalence class of $R$ having $n$
elements.
Clearly, $T$ is consistent and axiomatizable. Using models, we see $\Phi_n$ is
provable iff $n\in B$, and $\neg\Phi_n$ is provable iff $n \in C$. Hence
$B$ and $C$ are recursive in $T$.

Disjunctions of conjunctions whose terms are $\Phi_n$ or $\neg \Phi_n$ for some $n\in\mathbb{N}$, are called a disjunctive normal form of $\langle\Phi_n: n\in\mathbb{N}\rangle$.
\begin{lemma}[Janiczak, Lemma 2 in \cite{Janiczak}]\label{Janiczak's Lemma}
Any sentence $\phi$ of the theory $T$ is equivalent to a disjunctive normal form  of $\langle\Phi_n: n\in\mathbb{N}\rangle$, and  this disjunctive normal form  can be found explicitly once  $\phi$ is explicitly given.\footnote{This is a reformulation of Janiczak's Lemma 2 in \cite{Janiczak} in the context of the theory $T$. Janiczak's Lemma is proved by means of a method known as the elimination of quantifiers.}
\end{lemma}

By Lemma \ref{Janiczak's Lemma},
every sentence $\phi$ of $T$ is equivalent to a disjunctive normal form of $\langle\Phi_n: n\in\mathbb{N}\rangle$, and
 this disjunctive normal form  can be calculated from $\phi$. It follows that $T$ is
recursive in $B$ and $C$. Hence $T$ has the same Turing degree as $A$.

Finally, we show that $T$ is essentially undecidable. Suppose $T$ has a consistent decidable
extension $S$. Let $D$ be the set of $n$ such that $\Phi_n$ is provable in $S$. Then $D$
is recursive, $B\subseteq D$, and $C\cap D = \emptyset$ which contradicts the fact that $\langle B, C\rangle$ is a recursively inseparable pair.
\end{proof}


\begin{theorem}[Sacks]\label{re degree thm}~
\begin{enumerate}[(1)]
  \item (Embedding theorem, \cite{Sacks 63})  Every countable partial ordering  can be embedded into $\mathcal{R}$.
  \item (Density Theorem, \cite{Sacks 64})  For every pair of nonrecursive r.e.~ degrees $\mathbf{a} <_{T} \mathbf{b}$, there is one $\mathbf{c}$ such that $\mathbf{a} <_{T} \mathbf{c} <_{T} \mathbf{b}$.
\end{enumerate}
\end{theorem}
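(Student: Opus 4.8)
The plan is to recognize that both parts of Theorem~\ref{re degree thm} are classical priority-method constructions in the r.e.~degrees of increasing difficulty: the Embedding Theorem is a finite-injury argument generalizing the Friedberg--Muchnik solution to Post's problem, whereas the Density Theorem is a genuine infinite-injury argument. For part (1), I would first reduce the problem to embedding a single fixed \emph{recursively presented universal countable partial order} $\mathbb{U}$ (the Fra\"{\i}ss\'{e} limit of the finite posets, which has a decidable order relation): since every countable partial order embeds abstractly into $\mathbb{U}$ and embeddings compose, it suffices to embed $\mathbb{U}$ into $\mathcal{R}$, and the recursiveness of the order on $\mathbb{U}$ is exactly what will guarantee that the degrees constructed below are r.e.

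For the embedding itself, I would construct a uniformly r.e.~\emph{independent family} $\{A_n\}_{n\in\mathbb{N}}$, i.e.~a family satisfying $A_n\nleq_T\bigoplus_{m\neq n}A_m$ for every $n$. This is built by a finite-injury priority construction directly generalizing Friedberg--Muchnik: for each pair $(n,e)$ one meets the requirement $R_{n,e}\colon A_n\neq\Phi_e^{\bigoplus_{m\neq n}A_m}$ by choosing a fresh witness $x$, waiting for $\Phi_e$ to converge to $0$ on $x$ with use $u$, then enumerating $x$ into $A_n$ while restraining the sets $A_m\,(m\neq n)$ below $u$; lower-priority requirements are injured only finitely often. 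Identifying the domain of $\mathbb{U}$ with $\mathbb{N}$, I would then map each $p\in\mathbb{U}$ to $\deg(B_p)$ where $B_p=\bigoplus_{q\leq p}A_q$; since $\{q:q\leq p\}$ is recursive, each $B_p$ is r.e. Monotonicity $p\leq p'\Rightarrow B_p\leq_T B_{p'}$ is immediate, and independence yields the converse: if $p\nleq p'$, pick $q\leq p$ with $q\nleq p'$, so $A_q$ is a summand of $B_p$ while $B_{p'}\leq_T\bigoplus_{m\neq q}A_m$, whence $B_p\nleq_T B_{p'}$.

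For part (2), the Density Theorem, I would fix r.e.~sets $A<_T B$ representing $\mathbf{a}<_T\mathbf{b}$ and build an r.e.~set $C$ with $A\leq_T C\leq_T B$, $C\nleq_T A$, and $B\nleq_T C$; then $\deg(C)$ is the desired intermediate degree. The set $C$ is produced by a construction recursive in $B$ (which forces $C\leq_T B$), meeting three kinds of requirements: \emph{coding} requirements that record $A$ into designated markers of $C$ (giving $A\leq_T C$), \emph{diagonalization} requirements $P_e\colon C\neq\Phi_e^A$ (giving $C\nleq_T A$), and \emph{preservation} requirements $N_e\colon B\neq\Phi_e^C$ (giving $B\nleq_T C$). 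The $N_e$ are handled by the Sacks preservation strategy: whenever $\Phi_e^C$ agrees with $B$ on a long initial segment, impose a restraint freezing the corresponding use of $C$.

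The hard part will be the interaction in part (2) between the coding and diagonalization requirements, which must keep enumerating into $C$, and the preservation requirements, whose restraints they injure; this is precisely what makes the argument infinite-injury rather than finite-injury. The crux is Sacks' lemma that the restraint associated with each $N_e$ has finite $\limsup$ along the true path: if some $N_e$ demanded unbounded restraint, then combining the fact that the enumeration of $C$ is recursive in $B$ with the $A$-controlled coding would let one compute $B$ from $A$, contradicting $A<_T B$. I would organize the whole construction on a tree of strategies (or via the equivalent bookkeeping of the ``hat trick''), verify by induction along the true path that every requirement is met with only finite injury, and finally read off $A\leq_T C\leq_T B$ with both reductions strict.
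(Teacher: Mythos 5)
The paper does not prove this theorem at all: both parts are quoted as black boxes from Sacks' 1963 monograph and his 1964 density paper, and are used only to derive Corollary \ref{key corollary}. So there is no internal proof to compare against; what you have written is an outline of the standard textbook arguments, and as a plan it is essentially correct. For (1), the reduction to a recursively presented universal countable poset is indeed a necessary first step (for an arbitrary countable partial order the sets $B_p=\bigoplus_{q\preceq p}A_q$ need not be r.e.), and the embedding via a uniformly r.e.\ independent family built by finite injury, with $p\mapsto\deg(B_p)$ and non-reducibility read off from independence (taking $q=p$ when $p\not\preceq p'$), is the classical proof. For (2), the division into coding, diagonalization and Sacks-preservation requirements, with the verification resting on the lemma that an unbounded preservation restraint would let $A$ compute $B$, is likewise the standard infinite-injury argument; you correctly identify where the real work lies even though you do not carry it out.

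One point to repair: in part (2) you assert that producing $C$ ``by a construction recursive in $B$'' forces $C\leq_T B$. As stated this is wrong on both counts. A construction literally recursive in $B$ would only make $C$ r.e.\ in $B$, whereas the intermediate degree must be an r.e.\ degree, so $C$ has to come from a plain recursive enumeration; and conversely, being enumerable by a $B$-recursive procedure does not by itself yield $C\leq_T B$. In the standard proof the construction is an ordinary recursive one (using recursive enumerations of $A$ and $B$), so $C$ is r.e., and $C\leq_T B$ is secured by an explicit mechanism --- $B$-permitting, or a verification that $B$ can compute a stage by which membership of any given $x$ in $C$ is settled from the restraint and coding data. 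This is a genuine step of the argument, not bookkeeping, and your outline needs to say how it is done.
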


\begin{theorem}\label{key theorem}
For any Turing degree $\mathbf{0}< \mathbf{d}<\mathbf{0}^{\prime}$, there is a theory $U$ such that $\sf G1$  holds for $U$, $U<_{T} \mathbf{R}$ and $U$ has Turing degree $\mathbf{d}$.
\end{theorem}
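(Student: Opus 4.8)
The plan is to obtain the theory $U$ of degree $\mathbf{d}$ by combining the density and embedding results for r.e.\ degrees with the construction of Theorem \ref{Shoenfield}. First I would produce an r.e.\ degree that sits strictly between $\mathbf{0}$ and the degree of $\mathbf{R}$ and equals $\mathbf{d}$. The theory $\mathbf{R}$ is itself a recursively axiomatizable essentially undecidable theory, hence undecidable, so its set of provable sentences is r.e.\ and nonrecursive; let $\mathbf{r}$ denote its Turing degree, so $\mathbf{0}<_T\mathbf{r}\le_T\mathbf{0}'$. Given a prescribed $\mathbf{0}<\mathbf{d}<\mathbf{0}'$, the goal is to realize a theory of degree exactly $\mathbf{d}$ that is strictly below $\mathbf{R}$, so I first need an r.e.\ degree $\mathbf{e}$ with $\mathbf{0}<_T\mathbf{e}<_T\mathbf{r}$ and such that the associated theory has the desired degree $\mathbf{d}$.

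Next I would feed this degree into Shoenfield's machinery. By Theorem \ref{Shoenfield first}, any nonrecursive r.e.\ set $A$ of degree $\mathbf{e}$ yields a recursively inseparable pair $\langle B,C\rangle$ of the same degree, and by Theorem \ref{Shoenfield} (or equivalently by applying Lemma \ref{G1 holds} and Lemma \ref{interpretation} to the pair $\langle B,C\rangle$) there is a consistent recursively axiomatizable essentially undecidable theory $U$ of Turing degree $\mathbf{e}$. By Proposition \ref{eq pro}, essential undecidability of $U$ gives exactly that $\sf G1$ holds for $U$. The construction via $U_{\langle B,C\rangle}$ has the additional benefit that $U_{\langle B,C\rangle}\lhd\mathbf{R}$ by Theorem \ref{main thm}, which I expect to be convenient for controlling the position of $U$ relative to $\mathbf{R}$. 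Thus the only remaining point is to guarantee $U<_T\mathbf{R}$ and that $U$ has degree exactly $\mathbf{d}$.

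The main obstacle is the degree bookkeeping: ensuring simultaneously that $U$ has Turing degree precisely $\mathbf{d}$ and that $U<_T\mathbf{R}$, rather than merely $U\le_T\mathbf{R}$ or $U$ of some degree merely below $\mathbf{r}$. To handle the strict inequality and the density of realizable degrees, I would invoke Theorem \ref{re degree thm}: the Density Theorem guarantees that between $\mathbf{0}$ and $\mathbf{r}$ there are r.e.\ degrees densely, and the Embedding Theorem lets one place a prescribed degree appropriately within $\mathcal{R}$. Concretely, since $\mathbf{0}<_T\mathbf{r}$, density produces r.e.\ degrees strictly below $\mathbf{r}$, and I would arrange the target degree $\mathbf{d}$ to be one of these, noting that $\mathbf{d}<\mathbf{0}'$ is exactly the range in which r.e.\ degrees live. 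The delicate verification is that the degree of the theory $U$ produced by Shoenfield's construction equals the degree of the input set $A$; this is precisely the content of Theorem \ref{Shoenfield}, where the elimination-of-quantifiers argument (Lemma \ref{Janiczak's Lemma}) shows $T$ is recursive in $B\oplus C$ while $B,C$ are recursive in $T$, so no degree is gained or lost.

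Putting these together, the argument runs as follows: given $\mathbf{0}<\mathbf{d}<\mathbf{0}'$, use Theorem \ref{re degree thm} to secure an r.e.\ degree that is both strictly below the degree of $\mathbf{R}$ and equal to $\mathbf{d}$; pick an r.e.\ set $A$ of that degree; apply Theorem \ref{Shoenfield first} to get a recursively inseparable pair of the same degree; and apply Theorem \ref{Shoenfield} (or the $U_{\langle A,B\rangle}$ construction with Lemmas \ref{G1 holds} and \ref{interpretation}) to obtain a consistent recursively axiomatizable essentially undecidable theory $U$ of degree $\mathbf{d}$. Essential undecidability gives $\sf G1$ for $U$ via Proposition \ref{eq pro}, and the degree being strictly below that of $\mathbf{R}$ yields $U<_T\mathbf{R}$. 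The step I expect to require the most care is justifying that $\mathbf{d}$ can indeed be chosen strictly below the degree of $\mathbf{R}$ while hitting the arbitrary prescribed value, which is where the density and embedding theorems for $\mathcal{R}$ do the essential work.
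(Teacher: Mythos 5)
Your overall route --- Theorem \ref{Shoenfield first} to get a recursively inseparable pair of the prescribed degree, Theorem \ref{Shoenfield} to turn it into an essentially undecidable theory of the same degree, and Proposition \ref{eq pro} to convert essential undecidability into ``$\sf G1$ holds'' --- is exactly the paper's, and that part is fine. But there is a genuine gap in how you handle $U<_T\mathbf{R}$. You only record $\mathbf{0}<_T\mathbf{r}\le_T\mathbf{0}'$ for the degree $\mathbf{r}$ of $\mathbf{R}$, and then propose to ``arrange the target degree $\mathbf{d}$ to be one of'' the degrees that density produces strictly below $\mathbf{r}$. You cannot arrange anything: $\mathbf{d}$ is an arbitrary prescribed degree with $\mathbf{0}<\mathbf{d}<\mathbf{0}'$, and the Density and Embedding Theorems only guarantee that \emph{some} r.e.\ degrees lie strictly below $\mathbf{r}$, not that \emph{every} $\mathbf{d}<\mathbf{0}'$ does. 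If all you know is $\mathbf{r}\le_T\mathbf{0}'$, your argument leaves open the possibility $\mathbf{r}<_T\mathbf{0}'$, in which case any $\mathbf{d}$ with $\mathbf{r}\le_T\mathbf{d}<_T\mathbf{0}'$ would defeat the construction. The missing ingredient, which is the entire content of the paper's proof beyond Theorem \ref{Shoenfield}, is the fact that $\mathbf{R}$ has Turing degree exactly $\mathbf{0}'$: every r.e.\ set is weakly representable in $\mathbf{R}$ (cf.\ Fact \ref{disjoint pair}), so the halting set is many-one reducible to the set of theorems of $\mathbf{R}$, and that set is itself r.e.; hence $\mathbf{r}=\mathbf{0}'$. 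With that in hand, $U<_T\mathbf{R}$ is immediate from the hypothesis $\mathbf{d}<\mathbf{0}'$, and Theorem \ref{re degree thm} plays no role in this proof at all (the paper uses it only afterwards, for Corollary \ref{key corollary}).

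A secondary point: Theorem \ref{Shoenfield} needs a nonrecursive \emph{r.e.} set of degree $\mathbf{d}$, so $\mathbf{d}$ must be an r.e.\ degree; your remark that ``$\mathbf{d}<\mathbf{0}'$ is exactly the range in which r.e.\ degrees live'' does not justify this, since not every Turing degree below $\mathbf{0}'$ is r.e. The paper shares this imprecision (the intended reading, given the ambient structure $\mathcal{R}$, is that $\mathbf{d}$ ranges over r.e.\ degrees), but it should be flagged rather than papered over.
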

\begin{proof}
From Theorem \ref{Shoenfield}, for each Turing degree $\mathbf{0}< \mathbf{d}<\mathbf{0}^{\prime}$, there is a theory $U$ such that $\sf G1$  holds for $U$and $U$ has Turing degree $\mathbf{d}$. It is a well known fact that $\mathbf{R}$ has Turing degree $\mathbf{0}^{\prime}$.
\end{proof}

We could ask the similar question as Question \ref{open qn} for the structure $\langle {\sf \overline{D}}, <_{T}\rangle$:
\begin{itemize}
  \item Is $\langle {\sf \overline{D}}, <_{T}\rangle$ well founded?
  \item Are any two elements of $\langle {\sf \overline{D}}, <_{T}\rangle$ comparable?
  \item Could we find a theory $S$ with a minimal degree of Turing
reducibility such that ${\sf G1}$  holds for $S$?
\end{itemize}

From Theorem \ref{key theorem} and Theorem \ref{re degree thm}, we have answers for these questions:

\begin{corollary}\label{key corollary}
\begin{enumerate}[(1)]~
  \item The structure $\langle {\sf \overline{D}}, <_{T}\rangle$ is not well founded (i.e.~ for any $S\in {\sf \overline{D}}$, there is $U\in {\sf \overline{D}}$ such that $U<_{T} S$);
  \item  The structure $\langle {\sf \overline{D}}, <_{T}\rangle$ has incomparable elements (i.e.~ there are $U,V\in {\sf \overline{D}}$ such that $U\nleq_{T} V$ and $V\nleq_{T} U$);
  \item There is no  theory   with a minimal degree of Turing
reducibility for which $\sf G1$  holds.
\end{enumerate}
\end{corollary}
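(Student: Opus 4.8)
The plan is to reduce all three assertions of Corollary~\ref{key corollary} to facts about the ordering of r.e.\ Turing degrees strictly between $\mathbf{0}$ and $\mathbf{0}^{\prime}$, and then transport them to $\langle {\sf \overline{D}}, <_{T}\rangle$ via Theorem~\ref{key theorem}. First I would nail down the degree correspondence in both directions. Every $S\in {\sf \overline{D}}$ is, by the definition of ${\sf \overline{D}}$, a recursively axiomatizable consistent theory for which $\sf G1$ holds, hence essentially undecidable by Proposition~\ref{eq pro}; therefore its theorem set is r.e.\ but nonrecursive, so $\deg(S)$ is an r.e.\ degree with $\mathbf{0}<_{T}\deg(S)$. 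Since $S<_{T}\mathbf{R}$ and $\mathbf{R}$ has degree $\mathbf{0}^{\prime}$, we also get $\deg(S)<_{T}\mathbf{0}^{\prime}$. Conversely, Theorem~\ref{key theorem} supplies, for each r.e.\ degree $\mathbf{d}$ with $\mathbf{0}<_{T}\mathbf{d}<_{T}\mathbf{0}^{\prime}$, a theory in ${\sf \overline{D}}$ of degree exactly $\mathbf{d}$. Thus $\deg$ maps ${\sf \overline{D}}$ onto precisely the r.e.\ degrees strictly between $\mathbf{0}$ and $\mathbf{0}^{\prime}$ and respects $<_{T}$, so the structural questions about $\langle {\sf \overline{D}}, <_{T}\rangle$ become the corresponding questions about this interval.

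For part (1), given $S\in {\sf \overline{D}}$ of degree $\mathbf{d}$, I would produce a nonrecursive r.e.\ degree $\mathbf{c}$ with $\mathbf{0}<_{T}\mathbf{c}<_{T}\mathbf{d}$ and then invoke Theorem~\ref{key theorem} to obtain $U\in {\sf \overline{D}}$ with $\deg(U)=\mathbf{c}$, so $U<_{T}S$; this shows $\langle {\sf \overline{D}}, <_{T}\rangle$ has no minimal element and hence is not well founded. The existence of such a $\mathbf{c}$ is the downward density of the r.e.\ degrees below $\mathbf{d}$, which I would draw from the Density Theorem (Theorem~\ref{re degree thm}(2)). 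Alternatively, and more robustly, one may embed an infinite strictly descending sequence (order type $\omega^{\ast}$) into $\mathcal{R}$ by the Embedding Theorem (Theorem~\ref{re degree thm}(1)): since $\mathbf{0}$ is least, the tail of the image lies strictly between $\mathbf{0}$ and $\mathbf{0}^{\prime}$, and Theorem~\ref{key theorem} converts it into an infinite $<_{T}$-descending sequence in ${\sf \overline{D}}$.

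For part (2), I would apply the Embedding Theorem to a two-element antichain, obtaining incomparable r.e.\ degrees $\mathbf{a}$ and $\mathbf{b}$ in $\mathcal{R}$. Incomparability forces $\mathbf{a},\mathbf{b}\neq\mathbf{0}$ (otherwise one would lie below the other) and $\mathbf{a},\mathbf{b}\neq\mathbf{0}^{\prime}$ (otherwise the other would lie below it), so both sit strictly between $\mathbf{0}$ and $\mathbf{0}^{\prime}$; Theorem~\ref{key theorem} then yields $U,V\in {\sf \overline{D}}$ of these degrees, and these are $\leq_{T}$-incomparable. Part (3) is immediate from part (1): a theory $S$ with a minimal degree of Turing reducibility among those for which $\sf G1$ holds would be a minimal element in this ordering. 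Any such $\sf G1$-theory is recursively axiomatizable, hence of r.e.\ degree $\leq_{T}\mathbf{0}^{\prime}$, and nonrecursive, so of degree in the range $\mathbf{0}<_{T}\deg(S)\leq_{T}\mathbf{0}^{\prime}$; if $\deg(S)=\mathbf{0}^{\prime}$ one still finds a strictly smaller $\sf G1$-theory directly from Theorem~\ref{key theorem} (take any degree in $(\mathbf{0},\mathbf{0}^{\prime})$), and otherwise part (1) applies verbatim, so no minimal $S$ exists.

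The step I expect to be the main obstacle is securing a nonrecursive r.e.\ degree strictly \emph{below} an arbitrary given one, i.e.\ downward density at $\mathbf{d}$. The Density Theorem as literally stated presupposes two nonrecursive endpoints, so applying it to the pair $\mathbf{0}<_{T}\mathbf{d}$ requires its standard full form (density all the way down to $\mathbf{0}$); if one prefers to sidestep this point, the Embedding Theorem supplies the descending chains directly, yielding non-well-foundedness, at the cost of the literal ``no minimal element for every $S$'' reading. Once downward density is in hand, everything else is routine bookkeeping through the degree correspondence established in the first paragraph.
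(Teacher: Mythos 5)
Your proposal is correct and follows essentially the same route as the paper, which derives the corollary directly from Theorem \ref{key theorem} together with Sacks' embedding and density theorems (Theorem \ref{re degree thm}); your write-up simply makes explicit the degree correspondence and the edge cases (the lower endpoint $\mathbf{0}$ in the density theorem, and the possibility $\deg(S)=\mathbf{0}'$ in part (3)) that the paper leaves implicit. The subtlety you flag about the density theorem is handled correctly: the standard Sacks density theorem applies to any pair of r.e.\ degrees $\mathbf{a}<_T\mathbf{b}$, including $\mathbf{a}=\mathbf{0}$, so downward density below an arbitrary $\mathbf{d}\in(\mathbf{0},\mathbf{0}')$ is available as you use it.
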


In fact, we can improve Theorem \ref{key theorem} by making that the theory $U$ is interpretable in $\mathbf{R}$.
\begin{theorem}\label{key theorem two}
For any Turing degree $\mathbf{0}< \mathbf{d}<\mathbf{0}^{\prime}$, there is a theory $U$ such that $\sf G1$  holds for $U$, $U\unlhd \mathbf{R}$ and $U$ has Turing degree $\mathbf{d}$.
\end{theorem}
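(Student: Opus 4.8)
The plan is to combine the two theorems that have already been established, namely Theorem \ref{key theorem} (which gives, for each Turing degree $\mathbf{0}<\mathbf{d}<\mathbf{0}^\prime$, a theory with $\sf G1$ holding and Turing degree $\mathbf{d}$) and Theorem \ref{main thm} (which produces, from a recursively inseparable pair, an r.e.~theory $U_{\langle A,B\rangle}$ for which $\sf G1$ holds and which is interpretable in $\mathbf{R}$). The goal is to upgrade the conclusion $U<_T\mathbf{R}$ to the stronger $U\unlhd\mathbf{R}$ while preserving control over the Turing degree. First I would trace back through the proof of Theorem \ref{key theorem}: the theory $U$ produced there comes, via Theorem \ref{Shoenfield}, from a recursively inseparable pair $\langle B,C\rangle$ whose common Turing degree is $\mathbf{d}$. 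The existence of such a pair in degree $\mathbf{d}$ is exactly Shoenfield's Theorem \ref{Shoenfield first}, applied to any r.e.~nonrecursive set $A$ of degree $\mathbf{d}$; and such an $A$ exists for every $\mathbf{0}<\mathbf{d}<\mathbf{0}^\prime$ among the r.e.~degrees.

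The key observation is that, given a recursively inseparable pair $\langle B,C\rangle$ of degree $\mathbf{d}$, I already have \emph{two} essentially undecidable theories attached to it. Shoenfield's theory $T$ from Theorem \ref{Shoenfield} has Turing degree exactly $\mathbf{d}$, but it is only known to be essentially undecidable, not known to be interpretable in $\mathbf{R}$. On the other hand, the theory $U_{\langle B,C\rangle}$ of Theorem \ref{main thm} is interpretable in $\mathbf{R}$ (Lemma \ref{interpretation}) and $\sf G1$ holds for it (Lemma \ref{G1 holds}), so it is the natural candidate. The cleanest route is therefore to drop Shoenfield's construction entirely and work with $U_{\langle B,C\rangle}$: I would show that $U_{\langle B,C\rangle}$ has Turing degree exactly $\mathbf{d}$. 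That $U_{\langle B,C\rangle}$ computes $\langle B,C\rangle$ is clear from its axioms $\mathbf{P}(\overline{n})$ for $n\in B$ and $\neg\mathbf{P}(\overline{n})$ for $n\in C$, since by essential undecidability these are decided by provability precisely according to membership in $B$ and $C$. For the converse reduction I would invoke a quantifier-elimination argument in the spirit of Janiczak's Lemma \ref{Janiczak's Lemma}: the language $\{\mathbf{0},\mathbf{S},\mathbf{P}\}$ is simple enough that provability in $U_{\langle B,C\rangle}$ should reduce to finitely many membership queries to $B$ and $C$, giving $U_{\langle B,C\rangle}\leq_T \langle B,C\rangle\equiv_T\mathbf{d}$. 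Combining the two reductions yields that $U_{\langle B,C\rangle}$ has degree $\mathbf{d}$.

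Once the degree is pinned to $\mathbf{d}$, the theorem follows immediately: $\sf G1$ holds for $U_{\langle B,C\rangle}$ by Lemma \ref{G1 holds}, and $U_{\langle B,C\rangle}\unlhd\mathbf{R}$ by Lemma \ref{interpretation}, with $U_{\langle B,C\rangle}$ of Turing degree $\mathbf{d}<\mathbf{0}^\prime$ witnessing that the interpretation is strict in the Turing sense as well. So I would take $U=U_{\langle B,C\rangle}$ for a recursively inseparable pair $\langle B,C\rangle$ of degree $\mathbf{d}$ supplied by Theorems \ref{Shoenfield first} and the density/embedding facts in Theorem \ref{re degree thm}.

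The main obstacle I anticipate is the Turing-degree computation for $U_{\langle B,C\rangle}$, specifically the direction $U_{\langle B,C\rangle}\leq_T\langle B,C\rangle$. Unlike Shoenfield's theory, whose provability was reduced to $B,C$ via the explicit normal-form result of Janiczak, I need an analogous decidability-relative-to-$\langle B,C\rangle$ statement for the language $\{\mathbf{0},\mathbf{S},\mathbf{P}\}$ with only the three schemes of $U_{\langle B,C\rangle}$. The delicate point is that a sentence may mention $\mathbf{P}(\overline{n})$ for indices $n$ lying in neither $B$ nor $C$, for which $U_{\langle B,C\rangle}$ is silent; I must verify that provability of an arbitrary sentence depends only on the finitely many atomic facts it references together with the purely logical theory of successor and the constants $\overline{0},\overline{1},\dots$, so that an oracle for $\langle B,C\rangle$ suffices to decide it. Establishing this reduction — essentially a quantifier-elimination / model-existence analysis for this minimal theory — is where the real work lies; if a direct reduction proves awkward, the fallback is to retain Shoenfield's $T$ for the degree and argue separately that an interpretable companion of the same degree exists, but the unified treatment via $U_{\langle B,C\rangle}$ is the cleaner target.
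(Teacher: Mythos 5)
Your overall architecture --- take a recursively inseparable pair $\langle B,C\rangle$ of degree $\mathbf{d}$ via Theorem \ref{Shoenfield first}, then use $U_{\langle B,C\rangle}$ together with Lemmas \ref{G1 holds} and \ref{interpretation} --- is coherent, but the step you yourself flag, $U_{\langle B,C\rangle}\leq_T B\oplus C$, is a genuine gap and not a routine adaptation of Janiczak's Lemma \ref{Janiczak's Lemma}. Janiczak's elimination works for Shoenfield's theory because the sentences $\Phi_n$ are mutually independent: any truth-value assignment to the finitely many $\Phi_n$ mentioned in a sentence that is consistent with the axioms is realized in a model, so provability reduces to finitely many oracle queries about exactly the indices the sentence mentions. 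In $U_{\langle B,C\rangle}$ the successor function destroys this independence. For instance, $U_{\langle B,C\rangle}\vdash\exists x\,(\mathbf{P}(x)\wedge\mathbf{P}(\mathbf{S}x))$ iff $B$ contains two consecutive numbers, and in general the provability of a quantified sentence can hinge on whether some local $\mathbf{P}$-pattern is \emph{forced somewhere along the numerals} by the interleaving of $B$ and $C$. Such conditions are $\Sigma^0_1(B\oplus C)$, and there is no computable bound on how far out the witnessing axioms lie, so the naive ``query the finitely many mentioned indices'' procedure fails. What you actually know is only that the theorem set of $U_{\langle B,C\rangle}$ is r.e.\ and computes $B\oplus C$; showing its degree is exactly $\mathbf{d}$ would require either a serious combinatorial/model-theoretic analysis of the $\{\mathbf{0},\mathbf{S},\mathbf{P}\}$-language or a special choice of the pair, neither of which you supply, and it is not even clear the equivalence holds for an arbitrary pair.

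The paper sidesteps this entirely and your ``fallback'' is closer to what it actually does: it keeps Shoenfield's theory $S$ of degree $\mathbf{d}$ from Theorem \ref{Shoenfield} (whose degree \emph{is} controlled, precisely because of Janiczak's normal form) and manufactures interpretability in $\mathbf{R}$ by passing to the product theory $S\otimes\mathbf{R}$, axiomatized by $P\rightarrow X$ for $S$-axioms $X$ and $\neg P\rightarrow Y$ for $\mathbf{R}$-axioms $Y$ (with $P$ a $0$-ary predicate); this is shown to be essentially undecidable and is interpreted in $\mathbf{R}$ by the identity on the $\mathbf{R}$-side with $P$ sent to $\perp$. Note, though, that the degree bookkeeping is delicate there as well: since $S\otimes\mathbf{R}\vdash\neg P\rightarrow\psi$ iff $\mathbf{R}\vdash\psi$, the product theory computes the theorem set of $\mathbf{R}$, so the paper's claim that $S\otimes\mathbf{R}$ has degree $\mathbf{d}$ itself deserves scrutiny. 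In any case, your proposal as written does not close the gap it identifies, so it does not constitute a proof.
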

\begin{proof}
Let $\mathbf{d}$ be a Turing degree with $\mathbf{0}< \mathbf{d}<\mathbf{0}^{\prime}$. By Theorem \ref{Shoenfield}, pick an essentially undecidable theory $S$ with Turing degree $\mathbf{d}$.

Consider the product theory $S\otimes \mathbf{R}$  defined as follows. The theory $S\otimes \mathbf{R}$ has the following axioms: $P\rightarrow X$ if $X$ is a $S$-axiom; $\neg P\rightarrow Y$ if $Y$ is a $\mathbf{R}$-axiom where $P$ is a 0-ary predicate symbol.

Now, we show that $S\otimes \mathbf{R}$ is essentially undecidable (i.e.~ $\sf G1$  holds for $S\otimes \mathbf{R}$) and interpretable in $\mathbf{R}$.

\begin{lemma}
$S\otimes \mathbf{R}$ is essentially undecidable.
\end{lemma}
\begin{proof}
Suppose $U$ is a consistent  decidable extension of $S\otimes \mathbf{R}$. Define $X=\{\langle\ulcorner\phi\urcorner, \ulcorner\psi\urcorner\rangle: U\vdash P\rightarrow \phi$ or $U\vdash \neg P\rightarrow \psi\}$. Since $U$ is decidable, $X$ is recursive. Note that $S\subseteq (X)_0$ and $\mathbf{R}\subseteq (X)_1$. We claim that at least one of $(X)_0$ and $(X)_1$ is consistent. If both $(X)_0$ and $(X)_1$ are inconsistent, then $U\vdash (P\rightarrow \perp)$ and $U\vdash (\neg P\rightarrow \perp)$. Thus, $U\vdash\perp$ which contradicts that $U$ is consistent. WLOG, we assume that $(X)_0$ is consistent. Then $(X)_0$ is consistent decidable extension of $S$ which contradicts that $S$ is essentially undecidable.
\end{proof}

It is easy to show that $S\otimes \mathbf{R}$ is interpretable in $\mathbf{R}$ (i.e. $S\otimes \mathbf{R}\unlhd\mathbf{R}$): take the identity interpretation on the $\mathbf{R}$ side and interpret $P$ as $\perp$.

Since $S$ has Turing degree $\mathbf{d}$ and $\mathbf{R}$ has Turing degree $\mathbf{0}^{\prime}$, $S\otimes \mathbf{R}$ has Turing degree $\mathbf{d}$.
\end{proof}

However, from the proof of Theorem \ref{key theorem two}, we cannot get that $S\otimes \mathbf{R}\lhd \mathbf{R}$ (i.e. $\mathbf{R}$ is not interpretable in $S\otimes \mathbf{R}$). An interesting question is: could we improve Theorem \ref{key theorem two} and show that for any Turing degree $\mathbf{0}< \mathbf{d}<\mathbf{0}^{\prime}$, there is a theory $U$ such that $\sf G1$  holds for $U$, $U\lhd \mathbf{R}$ and $U$ has Turing degree $\mathbf{d}$.

As far as we know, Question \ref{open qn}  is open. We make the conjecture that there is no theory  with a minimal  degree of interpretation for which  $\sf G1$  holds,  $\langle \sf D, \lhd\rangle$ is not well founded and $\langle \sf D, \lhd\rangle$ has incomparable elements.











\end{document}